 \newtheorem{theorem}{Theorem}[section]
 \newtheorem{conjecture}[theorem]{Conjecture}
 \newtheorem{lemma}[theorem]{Lemma}
 \newtheorem{proposition}[theorem]{Proposition}
 \theoremstyle{definition}
 \newtheorem{remark}[theorem]{Remark}
\numberwithin{equation}{section}
\begin{document}
\title[On a conjecture of Zhuang  and Gao]
{On a conjecture of Zhuang  and Gao}

\author[Y.K. Qu]{Yongke Qu}
\address{Department of Mathematics\\ Luoyang Normal University\\
LuoYang 471934, P.R.
CHINA}
\email{yongke1239@163.com}

\author[Y.L. Li]{Yuanlin Li*}
\address{Department of Mathematics and Statistics\\  Brock University\\ St. Catharines, ON L2S 3A1, Canada}
\email{yli@brock.ca}

\thanks{*Corresponding author: Yuanlin Li, E-mail: yli@brocku.ca}

\begin{abstract}
Let $G$ be a multiplicatively written finite group. We denote by $\mathsf E(G)$ the smallest integer $t$ such that every sequence of $t$ elements in $G$ contains a product-one subsequence of length $|G|$. In 1961, Erd\H{o}s, Ginzburg and Ziv proved that $\mathsf E(G)\leq 2|G|-1$ for every finite ablian group $G$ and this result is known as the Erd\H{o}s-Ginzburg-Ziv Theorem. In 2005, Zhuang and Gao conjectured that $\mathsf E(G)=\mathsf d(G)+|G|$, where $\mathsf d(G)$ is the small Davenport constant. In this paper, we confirm the conjecture for the case when $G=\langle x, y| x^p=y^m=1, x^{-1}yx=y^r\rangle$, where $p$ is the smallest prime divisor of $|G|$ and $\mbox{gcd}(p(r-1), m)=1$.
\end{abstract}

\date{}

\maketitle

\noindent {\footnotesize {\it Keywords}: Erd\H{o}s-Ginzburg-Ziv theorem; Davenport constant; Product-one sequence; Metacyclic group} \\
\noindent {\footnotesize {\it 2020 Mathematics Subject Classifications}: Primary 11B75; Secondary 11P70}

\section{Introduction and main results}
Let $G$ be a finite group written multiplicatively. Let $S=g_1\bm\cdot \ldots\bm\cdot g_{\ell}$ be a sequence over $G$ with length $\ell$. We use $$\pi(S)=\{g_{\tau (1)}\ldots g_{\tau (\ell)}: \tau \mbox { a permutation of } [1,\ell]\}\subseteq G$$ to denote the set of products of $S$. We say that $S$ is a {\sl product-one sequence} if $1\in\pi(S)$. Let $\mathsf d(G)$ be the {\sl small Davenport constant} of $G$ (i.e., the maximal integer $\ell$ such that there is a sequence of length $\ell$ over $G$ which has no nontrivial product-one subsequence). We denote by $\mathsf E(G)$ the smallest integer $t$ such that every sequence of length $t$ over $G$ contains a product-one subsequence of length $|G|$. The problem of finding the precise value of the Davenport constant and what is now known as the Erd\H{o}s-Ginzburg-Ziv Theorem have become the starting points of zero-sum theory.

\begin{theorem}(Theorem of Erd\H{o}s-Ginzburg-Ziv) \cite{EGZ1961}\label{EGZ}
Let $G$ be a finite abelian group. Then $\mathsf E(G)\leq 2|G|-1$.
\end{theorem}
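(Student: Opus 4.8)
The plan is to prove the bound $\mathsf{E}(G)\le 2|G|-1$ for every finite abelian group by reducing to the case of cyclic groups of prime order and then propagating the bound through group extensions. Throughout I work additively, so a product-one subsequence of length $|G|$ is a subsequence of $|G|$ terms whose sum is $0$. The backbone of the argument is an induction on $|G|$: the base case is $|G|$ prime, and the inductive step passes from a subgroup $H$ together with the quotient $G/H$ to $G$ itself. Since subgroups and quotients of abelian groups are again abelian, the induction stays within the class under consideration.

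For the base case, let $|G|=p$ be prime, so $G\cong \mathbb{Z}/p\mathbb{Z}=\mathbb{F}_p$, and let $a_1,\dots,a_{2p-1}$ be the given terms. I would invoke the Chevalley--Warning theorem applied to the two polynomials $f_1=\sum_{i=1}^{2p-1}a_i x_i^{p-1}$ and $f_2=\sum_{i=1}^{2p-1}x_i^{p-1}$ over $\mathbb{F}_p$. Since there are $2p-1$ variables and $\deg f_1+\deg f_2=2(p-1)<2p-1$, the number of common zeros is divisible by $p$; as the origin is one such zero, there is a nonzero zero $(x_i)$. Each $x_i^{p-1}$ equals $1$ or $0$ according as $x_i\ne 0$ or $x_i=0$, so $f_2=0$ forces the number of nonzero coordinates to be a nonzero multiple of $p$, hence exactly $p$ (it lies between $1$ and $2p-1$); then $f_1=0$ says the corresponding $p$ terms sum to $0$. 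Alternatively one can argue combinatorially by sorting the $a_i$ and applying the Cauchy--Davenport inequality to the $p-1$ two-element sets $\{a_i,a_{i+p-1}\}$.

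For the inductive step, assume the bound for all abelian groups of order less than $|G|$, and choose (via Cauchy's theorem) a nontrivial proper subgroup $H$, so that $|H|,|G/H|<|G|$. Given $2|G|-1$ terms, I would repeatedly extract disjoint blocks of size $|G/H|$ that sum to $0$ in $G/H$, each extraction using $\mathsf{E}(G/H)\le 2|G/H|-1$ applied to the images of the remaining terms. A short count shows that while at least $2|G/H|-1$ terms remain one can keep extracting, and this yields $2|H|-1$ disjoint blocks. Each block sum lies in $H$, so I obtain a sequence of $2|H|-1$ elements of $H$; applying $\mathsf{E}(H)\le 2|H|-1$ produces $|H|$ of these block sums adding to $0$ in $H$. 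The union of the corresponding $|H|$ blocks consists of $|H|\cdot|G/H|=|G|$ terms whose total sum is $0$, which is exactly what is required.

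I expect the prime case to be the crux: it is the one place where a genuine tool (Chevalley--Warning, or equivalently Cauchy--Davenport) seems unavoidable, whereas the extension step is essentially bookkeeping. The only delicate point in the inductive step is verifying the extraction count---that starting from $2|G|-1$ terms one can guarantee $2|H|-1$ blocks of size $|G/H|$---which reduces to the inequality $2|H||G/H|-1-k|G/H|\ge 2|G/H|-1$ holding precisely for $k\le 2|H|-2$.
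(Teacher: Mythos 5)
The paper does not prove this statement at all: it is imported as the classical Erd\H{o}s--Ginzburg--Ziv theorem with a citation to \cite{EGZ1961}, so there is no internal proof to compare yours against. Your argument is correct and complete, and it is the standard proof of the abelian EGZ bound. In the base case, Chevalley--Warning applied to $f_1=\sum_{i=1}^{2p-1} a_i x_i^{p-1}$ and $f_2=\sum_{i=1}^{2p-1} x_i^{p-1}$ is legitimate since $\deg f_1+\deg f_2=2p-2<2p-1$, and Fermat's little theorem correctly turns $f_2=0$ into the assertion that the support of the nonzero solution has cardinality a positive multiple of $p$, hence exactly $p$ (being at most $2p-1$), while $f_1=0$ gives the zero-sum of the corresponding $p$ terms; the Cauchy--Davenport alternative you mention is equally valid. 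Your inductive step is also sound: the count $2|H|\,|G/H|-1-k|G/H|\geq 2|G/H|-1 \iff k\leq 2|H|-2$ is exactly right, so one extracts $2|H|-1$ disjoint blocks of size $|G/H|$ whose sums lie in $H$, and the inductive bound $\mathsf E(H)\leq 2|H|-1$ applied to these block sums selects $|H|$ blocks whose union is a zero-sum subsequence of length $|G|$ (the trivial case $|G|=1$ and the existence of a nontrivial proper $H$ for composite $|G|$ via Cauchy's theorem are the only housekeeping points, and you handle them). For context, the original 1961 argument treats the prime case by elementary combinatorial means close to your Cauchy--Davenport remark, and the passage from the prime case to all finite abelian groups is precisely your block-extraction scheme; so your write-up is a faithful, self-contained rendition of the classical proof that the paper simply cites.
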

Since that time (dating back to the early 1960s), zero-sum theory has developed into a flourishing branch of additive and combinatorial number theory. At the present moment, two conjectures on $\mathsf E(G)$ can be found in the literature. We briefly discuss some history and the motivation for studying these conjectures. For more detailed information, the interested reader may refer to the surveys \cite{Caro, GG2006, GeR} or the monographs \cite{GeK,Gr}. Although the main focus of zero-sum theory has initially been on abelian groups, the research has never been restricted to the abelian setting alone. In 1976, Olson \cite{O} showed that $\mathsf E(G)\leq 2|G|-1$ holds for all finite groups. In 1984, Yuster and Peterson \cite{YP1984} showed that $\mathsf E(G)\leq 2|G|-2$ when $G$ is a non-cyclic solvable group. Later Yuster \cite{YP1988} improved the result to $\mathsf E(G)\leq 2|G|-r$ provided that $|G|\geq 600((r-1)!)^2$. In 1996, Gao \cite{Gao1996} further improved the upper bound to $\mathsf E(G)\leq \frac{11|G|}{6}-1$. More recently in 2010, Gao and Li \cite{GL2010} proved that $\mathsf E(G)\leq\frac{7|G|}{4}-1$ and they proposed the following conjecture.

\begin{conjecture}\label{EGZconjecture}
Let $G$ be any finite non-cyclic group. Then $\mathsf E(G)\leq \frac{3|G|}{2}$.
\end{conjecture}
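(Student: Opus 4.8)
The plan is to deduce Conjecture~\ref{EGZconjecture} from two statements that are individually more approachable. First note that the lower bound $\mathsf E(G)\geq \mathsf d(G)+|G|$ holds for \emph{every} finite group: if $T$ is a product-one-free sequence attaining the length $\mathsf d(G)$, then the sequence $T\bm\cdot 1^{[\,|G|-1]}$, obtained by appending $|G|-1$ copies of the identity $1$, has length $\mathsf d(G)+|G|-1$ and admits no product-one subsequence of length exactly $|G|$; indeed, since only $|G|-1$ identities are available, any such subsequence must use at least one term of $T$, and those terms would then form a nonempty product-one subsequence of $T$, which is impossible. Hence the content of the conjecture is the \emph{upper} bound, and it would suffice to prove
\begin{equation}
\mathsf E(G)\leq \mathsf d(G)+|G|\qquad\text{and}\qquad \mathsf d(G)\leq \tfrac{1}{2}|G|\label{eq:twohalves}
\end{equation}
for every non-cyclic finite group $G$, since multiplying these through gives $\mathsf E(G)\leq\tfrac{3}{2}|G|$. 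The first inequality in \eqref{eq:twohalves} is exactly the hard half of the Zhuang--Gao equality, while the second is the relevant bound on the small Davenport constant.

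For the Davenport bound I would begin from Olson's prefix-product argument~\cite{O}. If $S=g_1\bm\cdot\ldots\bm\cdot g_\ell$ is a product-one-free sequence of maximal length $\ell=\mathsf d(G)$, then its $\ell+1$ left prefix products $1,\,g_1,\,g_1g_2,\,\ldots,\,g_1g_2\cdots g_\ell$ are pairwise distinct, since a coincidence $g_1\cdots g_i=g_1\cdots g_j$ with $i<j$ would exhibit $g_{i+1}\cdots g_j$ as a nontrivial product-one subsequence; this already gives $\mathsf d(G)\leq|G|-1$. Sharpening the constant from $1$ to $\tfrac12$ is where non-cyclicity must enter: if $\mathsf d(G)$ exceeded $\tfrac12|G|$ the prefix products would fill more than half of $G$, and a counting argument then forces the ambient group to be cyclic. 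I would simply invoke this known bound $\mathsf d(G)\leq\tfrac12|G|$ for non-cyclic $G$ rather than reprove it.

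The crux is the upper bound $\mathsf E(G)\leq\mathsf d(G)+|G|$, and the natural framework is induction on $|G|$ through a normal subgroup $H\trianglelefteq G$: one reduces a long sequence $S$ modulo $H$, extracts disjoint product-one subsequences of the reduction (each of length $|G/H|$, so that the underlying product in $G$ lands in $H$), and then applies the inductive control over $H$ to the resulting sequence of block-products so as to assemble $|H|$ blocks whose combined product is $1$, giving a product-one subsequence of $S$ of length $|H|\cdot|G/H|=|G|$. The Erd\H{o}s--Ginzburg--Ziv Theorem~\ref{EGZ} for abelian quotients and Olson's bound $\mathsf E(G)\leq 2|G|-1$ furnish base cases. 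Matching the \emph{sharp} constant $\mathsf d(G)+|G|$, however, rules out a crude greedy extraction—which loses too much—and demands a more efficient accounting that extracts blocks economically and exploits the exact interplay between $\mathsf d(H)$, $\mathsf d(G/H)$ and $\mathsf d(G)$.

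The hard part will be precisely this lifting step in the genuinely non-abelian regime, for two reasons. First, the product of a subsequence depends on the order in which its terms are multiplied, so the block products are not canonically defined elements of $H$ and cannot be freely permuted; over an abelian group this difficulty evaporates, which is exactly why the equality $\mathsf E(G)=\mathsf d(G)+|G|$ is a theorem of Gao in the abelian case and merely a conjecture otherwise. Second, the induction needs a proper normal subgroup of manageable index, yet perfect groups (for instance non-abelian simple groups) admit none of prime index, so a single inductive scheme cannot reach all non-cyclic $G$. These are the obstructions that keep Conjecture~\ref{EGZconjecture} open in general and that motivate the present paper to treat the metacyclic groups $G=\langle x,y\mid x^p=y^m=1,\ x^{-1}yx=y^r\rangle$ with $p$ the smallest prime divisor of $|G|$ and $\gcd(p(r-1),m)=1$: there the cyclic normal subgroup $\langle y\rangle$ has prime index $p$ and interacts with the outer generator $x$ in a sufficiently rigid way that the order-dependence of block products can be controlled explicitly, so that \eqref{eq:twohalves}, and hence the conjecture for these groups, can be verified directly.
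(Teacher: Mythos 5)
The statement you are addressing is a \emph{conjecture}: the paper does not prove it and it remains open in general, so there is no proof of the paper's to compare against. What the paper actually proves (Theorem~\ref{maintheorem}) is the Zhuang--Gao equality $\mathsf E(G)=\mathsf d(G)+|G|$ for one class of metacyclic groups, and it cites \cite{GLQ} for the fact that Conjecture~\ref{EGZconjecture} holds for all non-cyclic groups of odd order.

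Your proposal contains a genuine gap, and you in effect concede it yourself. The reduction you set up --- prove $\mathsf E(G)\leq \mathsf d(G)+|G|$ and $\mathsf d(G)\leq \tfrac{1}{2}|G|$ for every non-cyclic finite $G$, then combine (by substituting the second bound into the first, not by ``multiplying these through'') --- is sound as a reduction: the second inequality is a known theorem of Olson and White (not in this paper's bibliography; the paper's \cite{O} is a different result of Olson), and your lower-bound argument $\mathsf E(G)\geq \mathsf d(G)+|G|$ is the standard one, appearing in the paper as \cite[Lemma 4]{ZG2005}. But the first inequality is exactly the hard direction of Conjecture~\ref{mainconj}, which is itself open for general non-cyclic groups; your treatment of it is only an outline of an induction through a normal subgroup, and you explicitly name the two obstructions (order-dependence of block products, and perfect or simple groups admitting no prime-index normal subgroup) that prevent that induction from closing. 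So what you have is a reduction of one open conjecture to another open conjecture plus a known Davenport-constant bound --- a legitimate and indeed standard framing of the problem, and precisely the route by which the paper's main theorem yields Conjecture~\ref{EGZconjecture} for its metacyclic groups (there $\mathsf d(G)=m+p-2\leq \tfrac{1}{2}|G|$ and $\mathsf E(G)=\mathsf d(G)+|G|$), but it is not a proof of the statement.
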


We remark that the above mentioned upper bound on $\mathsf E(G)$ is the best possible since $\mathsf E(G)=\frac{3|G|}{2}$ when $G= D_{2n}$ is a dihedral group \cite[Theorem 8] {Bass2007}.

In 1996, Gao \cite{G1996} discovered the fundamental relationship between $\mathsf E(G)$ and $\mathsf d(G)$ for abelian groups, i.e., $\mathsf E(G)=\mathsf d(G)+|G|$. In 2005, Zhuang and Gao \cite{ZG2005} investigated this relation $\mathsf E(G)=\mathsf d(G)+|G|$ for some non-abelian groups and proposed the following conjecture.

\begin{conjecture}\label{mainconj}
Let $G$ be a finite group. Then $\mathsf E(G)=\mathsf d(G)+|G|$.
\end{conjecture}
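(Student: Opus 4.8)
The statement is the equality $\mathsf E(G)=\mathsf d(G)+|G|$ for an \emph{arbitrary} finite group, and the plan is to separate it into its two inequalities, since essentially all of the difficulty sits in one of them. The lower bound $\mathsf E(G)\ge \mathsf d(G)+|G|$ holds uniformly for every finite group, and I would dispose of it first by a direct construction. By the definition of $\mathsf d(G)$ there is a sequence $T$ over $G$ of length $\mathsf d(G)$ having no nontrivial product-one subsequence; set $S=T\bm\cdot 1^{[|G|-1]}$, of length $\mathsf d(G)+|G|-1$. Any subsequence of $S$ of length $|G|$ must use at least one term of $T$, because only $|G|-1$ copies of $1$ are available, and the product of the terms it draws from $T$, in any order, cannot equal $1$ (adjoining copies of $1$ does not change a product). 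Hence $S$ has no product-one subsequence of length $|G|$, giving $\mathsf E(G)\ge \mathsf d(G)+|G|$.

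The real content is the upper bound $\mathsf E(G)\le \mathsf d(G)+|G|$, which I would attack by induction on $|G|$. The base cases are the cyclic groups, covered by the Erd\H{o}s--Ginzburg--Ziv theorem together with Gao's identity $\mathsf E(G)=\mathsf d(G)+|G|$ for abelian $G$. For the inductive step I would fix a minimal normal subgroup $N\trianglelefteq G$, write $\bar G=G/N$, and analyse a sequence $S$ over $G$ of length $\mathsf d(G)+|G|$ through the quotient map. The standard machinery is a two-stage extraction: passing to $\bar S$ over $\bar G$, one repeatedly removes disjoint product-one subsequences over $\bar G$, each of length $[G:N]=|\bar G|$; each such block, read back in $G$, has a product lying in $N$, and collecting these products yields an auxiliary sequence $W$ over $N$. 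If $W$ is long enough to contain, by the inductive value of $\mathsf E(N)$, a product-one subsequence over $N$ of length $|N|$, then reassembling the corresponding $|N|$ blocks produces a product-one subsequence of $S$ of total length $|N|\cdot[G:N]=|G|$. The structural inputs this requires are the inductive hypothesis applied to $N$ and to $\bar G$, together with a subadditivity bound of the form $\mathsf d(G)\le \mathsf d(N)+\mathsf d(G/N)$.

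The principal obstacle -- and the reason the conjecture is open in general -- is that this extraction is far too lossy. To produce a length-$|N|$ product-one block over $N$ one needs roughly $\mathsf E(N)=\mathsf d(N)+|N|$ recorded products, but each recorded product consumes a block of length $[G:N]$ of $S$, so the naive count demands $|S|\gtrsim \mathsf E(N)\cdot[G:N]=\mathsf d(N)[G:N]+|G|$, which exceeds $\mathsf d(G)+|G|$ once $[G:N]>1$; the surplus factor $[G:N]$ simply cannot be absorbed by any subadditivity of $\mathsf d$. Overcoming it requires extracting blocks far more efficiently -- in the spirit of Gao's abelian proof, where the number of usable blocks is governed by the small Davenport constant rather than by $\mathsf E$ -- and such efficiency is presently available only when the relevant normal subgroup and quotient are cyclic and $\mathsf d(G)$ is known exactly, which is exactly why concrete results exist only for restricted families (dihedral, metacyclic) rather than all finite groups. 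I would therefore reduce to a minimal counterexample $G$, all of whose proper subgroups and quotients already satisfy the conjecture, and try to force it into such a tractable extension (for instance a cyclic normal subgroup of prime index). A complete proof for \emph{all} finite groups, however, would additionally need either a sharp general upper bound for $\mathsf d(G)$ in the non-abelian case or a genuinely new, non-inductive mechanism that avoids the index loss entirely; supplying one of these is the crux that the present partial results leave open.
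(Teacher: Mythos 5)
You were asked to prove what is, in the paper, Conjecture 1.3 --- and the paper itself does not prove it. The conjecture remains open; the paper's actual contribution is to verify it for the metacyclic groups $G=\langle x,y\mid x^p=y^m=1,\ x^{-1}yx=y^r\rangle$ with $p$ the smallest prime divisor of $|G|$ and $\gcd(p(r-1),m)=1$ (Theorem 1.4). So there is no full proof to compare yours against, and you are right not to claim one. Your proof of the lower bound $\mathsf E(G)\ge \mathsf d(G)+|G|$ is correct and is exactly the standard padding argument (take a product-one free sequence $T$ of length $\mathsf d(G)$ and append $1^{[|G|-1]}$); the paper simply cites this as \cite[Lemma 4]{ZG2005} rather than reproving it. Your diagnosis of why the naive quotient-extraction induction fails is also accurate: each element of the auxiliary sequence over $N$ costs a block of length $[G:N]$ in $S$, so the count demands on the order of $\mathsf d(N)[G:N]+|G|$ terms, which no subadditivity of $\mathsf d$ can reconcile with $\mathsf d(G)+|G|$. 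This is precisely the loss that all the partial results in the literature must circumvent, and your honest assessment that a new mechanism is required is the correct conclusion.

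For comparison, the paper's proof of its special case exhibits one concrete instance of the ``more efficient extraction'' you predicted. Because $N=\langle y\rangle$ is cyclic and normal with $G/N\cong C_p$, each block $T_j$ whose image in $G/N$ is product-one contributes not a single recorded element $u_j\in N$ but the entire conjugate set $\{u_j,u_j^{r},\ldots,u_j^{r^{p-1}}\}\subseteq\pi(T_j)$ (Lemmas 3.2 and 3.3), which has $p$ distinct elements whenever $u_j\neq 1$; the hypothesis $\gcd(p(r-1),m)=1$ forces these conjugates to lie in distinct cosets of every proper subgroup $M\lneq N$ (Lemma 3.1). Feeding these $p$-element sets into the DeVos--Goddyn--Mohar generalization of Kneser's theorem (Lemma 2.1) yields $|\Pi^{m-1}(\mathbf{A})|\geq m$, i.e.\ the partial product set is all of $N$, so a product-one subsequence of length $pm=|G|$ is extracted from only $\mathsf d(G)+|G|$ terms --- each block is, in effect, worth $p$ elements rather than one, which is exactly what absorbs the index factor $[G:N]=p$ you identified as the obstruction. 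This amplification depends essentially on the cyclic structure of $N$ and the arithmetic of $r$, and nothing analogous is available for a general minimal normal subgroup, which is why the conjecture stands open and why your proposal, while correct in everything it asserts, cannot be completed into a proof of the statement as posed.
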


They confirmed Conjecture~\ref{mainconj} for dihedral groups of order $2p$ where $p\geq 4001$ is a prime. Gao and Lu \cite{GL2008} then improved this result to dihedral groups of order $2n$ for all $n\geq 23$. Bass \cite{Bass2007} extended the method used by Gao and Lu, and proved the conjecture for all dihedral groups, dicyclic groups, and groups $C_p\ltimes C_q$, where $p$, $q$ are primes. Recently, in 2015 Han \cite{Han2015} verified the conjecture for the case when $G$ is a non-cyclic nilpotent group or when $G\cong C_p\ltimes C_{pn}$, where $p$ is a prime. Most recently, in 2019 Han and Zhang \cite{HZ2019} further extended Han's early result and verified the conjecture for $G\cong C_m\ltimes C_{mn}$. It is natural to extend this line of research to the class of metacyclic groups $C_n\ltimes C_m$ where $\mbox{gcd}(n,m)=1$. In this paper, we confirm Conjecture~\ref{mainconj} for the following class of metacyclic groups and our main result is as follows

%We found the investigation of a special class of meta-cyclic group $G=\langle x, y| x^p=y^m=1, x^{-1}yx=y^r\rangle\cong C_{p}\ltimes C_m$ where $p$ is the %smallest prime divisor of $|G|$ and $\mbox{gcd}(p(r-1), m)=1$ is of most interest as it is related to resolving Conjecture~\ref{EGZconjecture}. We remark that %most recently, Gao, Li and Qu \cite{GLQ} made a substantial progress on Conjecture~\ref{EGZconjecture} by proving that $\mathsf E(G)\leq \frac{3|G|}{2}$ holds %for all non-cyclic groups of odd order. The method developed in this paper has been played an important role in proving Conjecture~\ref{EGZconjecture} in %\cite{GLQ}. The main result of this paper is as follows.

\begin{theorem}\label{maintheorem} Let $G=\langle x, y| x^p=y^m=1, x^{-1}yx=y^r\rangle$, where $p$ is the smallest prime divisor of $|G|$ and $\mbox{gcd}(p(r-1), m)=1$. Then $\mathsf d(G)=m+p-2$ and $\mathsf E(G)=\mathsf d(G)+|G|$.
\end{theorem}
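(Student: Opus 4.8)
The plan is to record the group-theoretic structure first and then treat $\mathsf d(G)$ and $\mathsf E(G)$ in turn, reducing everything to the cyclic pieces $N=\langle y\rangle\cong C_m$ and $G/N\cong C_p$. Writing $|G|=pm$, I would begin with the one structural fact that drives the whole argument: every element $g\in G\setminus N$ has order exactly $p$. Indeed, for $g=x^ky^a$ with $1\le k\le p-1$ one computes $g^p=y^{a(1+r^k+\cdots+r^{(p-1)k})}$, and the hypotheses $r^p\equiv 1\ (\mathrm{mod}\ m)$ and $\gcd(p(r-1),m)=1$ force $\gcd(r^k-1,m)=1$ for each such $k$, so that the exponent $1+r^k+\cdots+r^{(p-1)k}$ vanishes modulo $m$. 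In particular the sequence consisting of $p$ copies of any fixed $g\notin N$ is a product-one sequence of length $p$. I will also use repeatedly the \emph{reassembly principle}: since $\pi(S)$ ranges over all orderings, the concatenation of finitely many product-one subsequences is again a product-one subsequence, so to build a product-one subsequence of a prescribed length it suffices to exhibit disjoint product-one blocks whose lengths add up to that value.

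For $\mathsf d(G)=m+p-2$, the lower bound comes from the sequence consisting of $m-1$ copies of $y$ and $p-1$ copies of $x$: any nonempty product-one subsequence has image $0$ in $C_p$, forcing it to use no $x$, and would then be a nontrivial product-one subsequence using at most $m-1$ copies of $y$ inside $C_m$, which is impossible because $\mathsf d(C_m)=m-1$. For the upper bound I would take an arbitrary sequence of length $m+p-1$ and split according to the number $n_0$ of terms lying in $N$. If $n_0\ge m$, then $\mathsf d(C_m)=m-1$ already yields a nontrivial product-one subsequence inside $N$. If $n_0\le m-1$, then at least $p$ terms lie outside $N$, and I would combine the order-$p$ structure with a pigeonhole analysis of the cosets in $C_p$ to extract a short product-one subsequence: the order-$p$ fact handles repeated outside terms, while pairing elements of complementary cosets handles the mixed case. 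This subcase is delicate and is where the hypothesis that $p$ is the smallest prime divisor of $|G|$ enters the counting.

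Turning to $\mathsf E(G)=\mathsf d(G)+|G|$, the lower bound is general: padding a product-one-free sequence of length $\mathsf d(G)$ with $|G|-1$ copies of the identity gives a sequence of length $\mathsf d(G)+|G|-1$ with no product-one subsequence of length $|G|$, since any such subsequence would use at least one term of the product-one-free part and those terms would themselves form a nonempty product-one subsequence, a contradiction. For the upper bound I must show that every sequence $S$ of length $pm+m+p-2$ has a product-one subsequence of length exactly $pm$, and I would again split on the number of terms outside $N$. When at most $p-1$ terms lie outside $N$, at least $(p+1)m-1$ terms lie in $N$, so iterating the Erd\H{o}s--Ginzburg--Ziv theorem $\mathsf E(C_m)=2m-1$ extracts $p$ disjoint product-one subsequences of length $m$ inside $N$, whose concatenation has length $pm$ and product one. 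When many terms lie outside $N$, I would group those terms into disjoint length-$p$ blocks whose coset-images sum to zero in $C_p$ (available by iterating $\mathsf E(C_p)=2p-1$), record the product of each block as an element of $N$, and then pick out enough blocks together with a fine-tuning subsequence drawn from the terms of $S$ lying in $N$ so that the total length is exactly $pm$ and the accumulated product in $N$ is trivial.

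The main obstacle is precisely this last step. Unlike in the abelian case, product-one blocks cannot be split, so after reducing the outside terms to block-products in $N$ one must hit the target length $pm$ exactly while simultaneously forcing the product in $C_m$ to equal $1$; this is a combined length-and-sum zero-sum constraint in $C_m$. Making it work will require spending the slack of $m+p-2$ discardable terms efficiently and exploiting that $\gcd(p,m)=1$, so that block lengths, all multiples of $p$, can be complemented by $N$-terms to reach any residue modulo $m$, together with $p$ being the smallest prime divisor of $|G|$ to keep the governing inequalities tight. I expect the technical heart of the paper to be a prescribed-length zero-sum lemma over $C_m$ that delivers this simultaneous control of length and product.
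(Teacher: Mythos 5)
Your lower bounds, the case split on how many terms lie outside $N$, and the extreme case with at most $p-1$ terms outside $N$ (iterating Erd\H{o}s--Ginzburg--Ziv inside $C_m$) all agree with the paper. But the step you yourself flag as the ``main obstacle'' is a genuine gap, and the tool you hope will close it --- a prescribed-length zero-sum lemma over $C_m$, applied after recording each block's product as a single element of $N$ --- cannot exist. Consider $S=x^{[pm-1]}\bm\cdot y^{[m+p-1]}$, which has the required length $pm+m+p-2$. All terms outside $N$ equal $x$, so any unsplit block with zero coset-sum in $C_p$ consists of copies of $x$ only; its length is a multiple of $p$ and its product is exactly $1$. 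If you select such blocks of total length $L$ together with $b$ copies of $y$ so that $L+b=pm$ and the product is trivial, then $p\mid b$ (since $p\mid L$ and $p\mid pm$) and $m\mid b$ (the product is $y^{b}$), hence $pm\mid b$; but $b=0$ would require $pm$ copies of $x$ and $b=pm$ would require $pm$ copies of $y$, while only $pm-1$ and $m+p-1$ are available. So no selection of whole blocks plus $N$-terms yields a product-one subsequence of length $pm$, no matter what zero-sum lemma you invoke. The theorem is nevertheless true for this $S$: since $(r-1)(1+r+\cdots+r^{p-1})=r^p-1\equiv 0\pmod m$ and $\gcd(r-1,m)=1$, we get $1+r+\cdots+r^{p-1}\equiv 0\pmod m$, so the interleaved product $(yx)^p=x^p\,y^{r+r^2+\cdots+r^p}=1$ is a product-one block of length $2p$, and padding it with $m-2$ disjoint blocks $x^{[p]}$ gives length exactly $pm$. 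The $y$'s must be threaded \emph{through} the $x$-blocks, picking up conjugates $y^{r^s}$ --- precisely the possibility your reduction discards when you declare that product-one blocks cannot be split.

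This non-abelian interleaving is the paper's actual mechanism, and it has two ingredients absent from your proposal. First (Lemmas~\ref{conjugationone} and~\ref{conjugationtwo}), if $\varphi(T)$ is a minimal product-one sequence over $G/N$ and $u\in\pi(T)\cap N$, then $\pi(T)$ --- and the product set of any concatenation of such blocks with further terms --- contains the whole conjugate orbit $\{u,u^{r},\ldots,u^{r^{p-1}}\}$, whose elements are pairwise distinct when $u\neq 1$ (Lemma~\ref{basic}); each block thus contributes a \emph{set} of up to $p$ elements of $N$, not one element. Second, the DeVos--Goddyn--Mohar generalization of Kneser's theorem (Lemma~\ref{genKneser}) is applied to the sequence of these conjugate sets inside the abelian group $N$; the hypothesis that $p$ is the smallest prime divisor of $|G|$ enters exactly here, via $m>p|M|$ for every proper subgroup $M<N$ (every prime divisor of $m$ exceeds $p$) and via $m\equiv 1\pmod p$ (Remark~\ref{mequiv1}), to force the relevant partial product set $\Pi^{m-1}(\mathbf{A})$, $\Pi^{s}(\mathbf{A})$, or $\Pi^{\ell}(\mathbf{A})$ to be all of $N$, hence to contain $1$ at the prescribed length. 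The same gap affects your upper bound for $\mathsf d(G)$: ``pigeonhole on cosets plus the order-$p$ fact'' says nothing about, e.g., $p$ pairwise distinct terms $xy^{a_1},\ldots,xy^{a_p}$ lying in a single coset together with a product-one free remainder in $N$; the paper's proof of Proposition~\ref{d(G)} handles this by the same conjugate-set-plus-Kneser argument, not by pigeonhole. Without these two ingredients, both of your ``delicate'' subcases remain unproved.
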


We remark that the investigation of the above mentioned class of metacyclic groups is of special interest as it is closely related to solving Conjecture~\ref{EGZconjecture}. Very recently, Gao, Li, and Qu \cite{GLQ} made substantial progress on this conjecture and  proved that $\mathsf E(G)\leq \frac{3|G|}{2}$ holds for all non-cyclic groups of odd order by using the minimal counterexample method. Our main result and some technical lemmas in Section 3 have been used in \cite{GLQ} as the necessary ingredients to construct a minimal counterexample leading to a contradiction. In addition, the methods developed in this paper (such as applying a generalization of Kneser's Additive Theorem to estimate a partial product of sets $\Pi^{m}(\mathbf{A})$) have been used repeatedly in \cite{GLQ} to help simplify some complicated computations and to achieve the best possible upper bound on $\mathsf E(G)$.

\section{Notation and Preliminaries}
We follow the notation and conventions detailed in \cite{GG2013}.

For real numbers $a,b\in \mathbb{R}$, we set $[a,b]=\{x\in \mathbb{Z}: a\leq x\leq b\}$. For integers $m,n\in \mathbb{Z}$, we denote by $\mbox{gcd}(m,n)$ the greatest common divisor of $m$ and $n$.

Let $G$ be a finite multiplicative group. If $A \subseteq G$ is a nonempty subset, then denote by  $\langle A \rangle $  the subgroup  of $G$ generated by $A$. If $A$ and $B$ are subsets of $G$, we define the product-set as $AB=\{ab:a\in A,b\in B\}$. Recall that by a {\sl sequence} over a group $G$, we mean a finite, unordered sequence where repetition of elements is allowed. We view sequences over $G$ as elements of the free abelian monoid $\mathcal{F}(G)$, denote multiplication in $\mathcal{F}(G)$ by the bold symbol $\bm\cdot$ rather than by juxtaposition, and use brackets for all exponentiation in $\mathcal{F}(G)$.

A sequence $S \in \mathcal F(G)$ can be written in the form $S= g_1  \bm \cdot \ldots \bm\cdot g_{\ell},$ where $|S|= \ell$ is the {\it length} of $S$. For $g \in G$, let $\mathsf v_g(S) = |\{ i\in [1, \ell] : g_i =g \}|\,  $ denote the {\it multiplicity} of $g$ in $S$. A sequence $T \in \mathcal F(G)$ is called a {\it subsequence } of $S$ and is denoted by $T \mid S$ if  $\mathsf v_g(T) \le \mathsf v_g(S)$ for all $g\in G$. Denote by $S \bm\cdot T^ {[-1]}$  the subsequence of $S$ obtained by removing the terms of $T$ from $S$.

If $S_1, S_2 \in \mathcal F(G)$, then $S_1 \bm\cdot S_2 \in \mathcal F(G)$ denotes the sequence satisfying $\mathsf v_g(S_1 \bm\cdot S_2) = \mathsf v_g(S_1 ) + \mathsf v_g( S_2)$ for all $g \in G$. For convenience, we  write
\begin{center}
 $g^{[k]} = \underbrace{g \bm\cdot \ldots \bm\cdot g}_{k} \in \mathcal F(G)\quad$
\end{center}
for $g \in G$ and $k \in \mathbb{N}_0$.

Suppose $S= g_1 \bm \cdot \ldots \bm\cdot g_{\ell} \in \mathcal F(G)$. Let $$\pi (S) = \{g_{\tau(1)}\ldots g_{\tau(\ell)}: \tau \mbox{ a permutation of } [1, \ell] \} \subseteq G$$ denote the {\it set of products} of $S$. Let $$\Pi_n(S) = \cup _{T\mid S,\ |T| = n}\pi(T)$$ denote the {\it set of all $n$-products} of $S$.
Let $$\Pi(S) = \cup_{1 \le n \le \ell}\Pi_n(S)$$ denote the {\it set of all subsequence products} of $S$. The sequence $S$ is called
\begin{itemize}
\item[$\bullet$]  {\it product-one} if $1 \in \pi(S)$;
\item[$\bullet$] {\it product-one free} if $1\not\in \Pi(S)$;
\item[$\bullet$] {\it minimal product-one } if $1\in\pi(S)$ and $S$ has no proper product-one subsequence.
\end{itemize}

If $\mathbf{A}=(A_1,\ldots, A_{\ell})$ is a sequence of finite subsets of $G$, let $m\leq \ell$, we define $$\Pi^{m}(\mathbf{A})=\{a_{i_1}\ldots a_{i_{m}}:1\leq i_1<\cdots<i_{m}\leq \ell\mbox{ and } a_{i_j}\in A_{i_j} \mbox{ for every } 1\leq j\leq m\}.$$  Let $A$ be a subset of $G$ and $\mbox{stab}(A)=\{g\in G: gA=A\}$ its stabilizer. The following lemma is a generalization of Kneser's Theorem which is crucial for our proof of the main result.

\begin{lemma}\cite[Theorem 1.3]{DGM2009}\cite[Theorem 13.1]{Gr}\label{genKneser}\label{genKneser}
Let $\mathbf{A}=(A_1,\ldots, A_{\ell})$ be a sequence of finite subsets of an abelian group $G$, let $m\leq \ell$, and let $H=\mbox{stab}(\Pi^{m}(\mathbf{A}))$. If $\Pi^{m}(\mathbf{A})$ is nonempty, then $$|\Pi^{m}(\mathbf{A})|\geq |H|\bigg(1-m+\sum_{Q\in G/H}min\big\{\ell,|\{i\in[1,\ell]:A_i\cap Q\neq \emptyset\}|\big\}\bigg).$$
\end{lemma}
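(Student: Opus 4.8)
The statement is the DeVos--Goddyn--Mohar generalization of Kneser's Theorem, so the plan is to deduce it from the classical Kneser Theorem by first removing the stabilizer and then inducting on the number of sets $\ell$. First I would record the reduction to trivial stabilizer. Writing $\varphi\colon G\to \bar G:=G/H$ for the quotient map and $\bar A_i=\varphi(A_i)$, one checks that $\Pi^m(\bar{\mathbf{A}})=\varphi(\Pi^m(\mathbf{A}))$, that $|\Pi^m(\mathbf{A})|=|H|\,|\Pi^m(\bar{\mathbf{A}})|$ because $\Pi^m(\mathbf{A})$ is a union of $H$-cosets, that a coset $Q=gH$ meets $A_i$ exactly when $\varphi(g)\in\bar A_i$, and that any element fixing $\Pi^m(\bar{\mathbf{A}})$ lifts into $H$, so $\mathrm{stab}(\Pi^m(\bar{\mathbf{A}}))$ is trivial. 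Hence the general statement follows once I prove, for every sequence with trivial stabilizer,
$$|\Pi^m(\mathbf{A})|\ \ge\ 1-m+\sum_{g\in G}\min\{m,\,f(g)\},\qquad f(g):=\bigl|\{i\in[1,\ell]:g\in A_i\}\bigr|.$$

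I would prove this by induction on $\ell$, applying the general (with-$|H|$) statement to the smaller cases. The two base cases are $m=1$, where $\Pi^1(\mathbf{A})=A_1\cup\cdots\cup A_\ell$ and the asserted bound is the exact counting identity $\sum_g\min\{1,f(g)\}=|\bigcup_i A_i|$, and $m=\ell$, where $\Pi^\ell(\mathbf{A})=A_1+\cdots+A_\ell$ is an ordinary sumset; here $f(g)\le\ell=m$ makes the cap inactive, $\sum_g\min\{\ell,f(g)\}=\sum_i|A_i|$, and the inequality is precisely the classical Kneser Theorem for several summands. For the inductive step with $1<m<\ell$ I would peel off the last set,
$$\Pi^m(\mathbf{A})=\Pi^m(\mathbf{A}')\ \cup\ \bigl(\Pi^{m-1}(\mathbf{A}')+A_\ell\bigr),\qquad \mathbf{A}'=(A_1,\dots,A_{\ell-1}),$$
and set $X=\Pi^m(\mathbf{A}')$, $Y=\Pi^{m-1}(\mathbf{A}')+A_\ell$. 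The inductive hypothesis bounds $|X|$ and $|\Pi^{m-1}(\mathbf{A}')|$ from below, and the classical Kneser Theorem bounds $|Y|$.

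The key simplification is a bookkeeping identity. Writing $f_{\ell-1}(g)=|\{i\le\ell-1:g\in A_i\}|$, so that $f(g)=f_{\ell-1}(g)+\mathbf 1_{A_\ell}(g)$, one finds that the target quantity exceeds the inductive bound for $X$ by exactly
$$\delta:=\bigl|\{g\in A_\ell:f_{\ell-1}(g)<m\}\bigr|,$$
since each point $g$ contributes a surplus $\min\{m,f(g)\}-\min\{m,f_{\ell-1}(g)\}$ that equals $1$ precisely when $g\in A_\ell$ and $f_{\ell-1}(g)<m$, and $0$ otherwise. Because $|X\cup Y|=|X|+|Y\setminus X|\ge \bigl(1-m+\sum_g\min\{m,f_{\ell-1}(g)\}\bigr)+|Y\setminus X|$, the induction closes as soon as I show $|Y\setminus X|\ge\delta$, i.e.\ that adjoining $A_\ell$ produces at least $\delta$ genuinely new $m$-products.

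The inequality $|Y\setminus X|\ge\delta$ is the main obstacle, and it is where the real content of the DeVos--Goddyn--Mohar theorem lies: $X$ and $Y$ are two sets whose stabilizers $\mathrm{stab}(X)$ and $\mathrm{stab}(Y)$ need not agree, so the overlap $X\cap Y$ can a priori be large enough to swamp the $\delta$ new elements promised by $Y$. A crude union bound such as $|X\cup Y|\ge\max\{|X|,|Y|\}$ loses too much, so the plan is to run the Kneser estimate for $Y=\Pi^{m-1}(\mathbf{A}')+A_\ell$ together with the inductive estimate for $\Pi^{m-1}(\mathbf{A}')$, and then rule out a large overlap by invoking the structure of sets with small sumset (the Kemperman Structure Theorem), with a case analysis according to how $\mathrm{stab}(X)$, $\mathrm{stab}(Y)$ and the common stabilizer $H$ are nested. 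Once the overlap is controlled in each case, the surplus count $\delta$ is recovered and the induction is complete.
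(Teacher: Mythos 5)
The paper itself contains no proof of this lemma --- it is quoted from \cite{DGM2009} (see also \cite[Theorem 13.1]{Gr}) --- so your attempt can only be measured against the published proofs. The preparatory parts of your plan are sound: the quotient by $H$ correctly reduces to the trivial-stabilizer case (the image of $\Pi^m(\mathbf{A})$ in $G/H$ has trivial stabilizer, $|\Pi^m(\mathbf{A})|=|H|\,|\Pi^m(\bar{\mathbf{A}})|$, and coset-incidence counts transform as you say); the decomposition $\Pi^m(\mathbf{A})=\Pi^m(\mathbf{A}')\cup\bigl(\Pi^{m-1}(\mathbf{A}')A_\ell\bigr)$ is correct; and the surplus bookkeeping is right, including the tacitly needed fact that the with-stabilizer inductive bound for $X=\Pi^m(\mathbf{A}')$ dominates $1-m+\sum_g\min\{m,f_{\ell-1}(g)\}$, which holds since $|H'|(1-m)\le 1-m$ and $\sum_{g\in Q}\min\{m,f_{\ell-1}(g)\}\le |H'|\min\{m,r'_Q\}$ for each coset $Q$ of $H'=\mathrm{stab}(X)$. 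One remark on the statement itself: the cap $\min\{\ell,\cdot\}$ as printed is a misprint for $\min\{m,\cdot\}$ --- with cap $\ell$ the bound is false (take $A_1=\cdots=A_\ell=\{1,g\}$ with $g$ of infinite order and $m=1$: the left side is $2$, the right side $2\ell$), and all of the paper's applications (Proposition~\ref{d(G)} and the proof of Theorem~\ref{maintheorem}) instantiate the cap with the number of chosen factors. Your target inequality $\,|\Pi^m(\mathbf{A})|\ge 1-m+\sum_g\min\{m,f(g)\}\,$ is the correct DGM form, so you have silently (and rightly) corrected the typo.

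The genuine gap is that the entire theorem has been relocated into the unproven inequality $|Y\setminus X|\ge\delta$, and nothing in the proposal proves it: ``rule out a large overlap by invoking the Kemperman Structure Theorem, with a case analysis according to how the stabilizers are nested'' is a declaration of intent, not an argument, and you concede yourself that this is where ``the real content'' lies. It is also far from clear the route can be made to work: KST describes critical pairs $(A,B)$ with $|AB|\le |A|+|B|-1$, whereas $X=\Pi^m(\mathbf{A}')$ and $Y=\Pi^{m-1}(\mathbf{A}')A_\ell$ are highly derived sets that do not arise as the two summands of any single small sumset, so it is not even specified to which pair KST would be applied, let alone how the overlap $X\cap Y$ is controlled in each case. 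The published proofs are themselves evidence that this naive ``peel off one fixed set'' induction cannot be closed locally: both the original argument in \cite{DGM2009} and the treatment in \cite[Chapter 13]{Gr} are long, delicate inductions on several parameters that transform the sequence by moving elements between the sets $A_i$, precisely because comparing $\Pi^m$ of the full sequence against the two pieces obtained by distinguishing one set loses too much; if $|Y\setminus X|\ge\delta$ followed from Kneser plus KST, the theorem would be a short corollary of classical results, which it is not. (There are also minor unhandled edge cases, e.g.\ $\Pi^m(\mathbf{A}')=\emptyset$ when fewer than $m$ of the first $\ell-1$ sets are nonempty, but these are secondary.) In sum: correct reduction to trivial stabilizer, correct base cases ($m=1$ by counting, $m=\ell$ by Kneser), correct surplus accounting --- but the core of the theorem is missing, replaced by a claim of essentially the same depth as the theorem itself.
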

We also need the following lemma about product-one free sequences.

\begin{lemma}\cite[Lemma 2.4]{GLP2014}\label{zerofree}
Let $S$ be a product-one free sequence over $G$. Then $|\Pi(S)|\geq |S|$.
\end{lemma}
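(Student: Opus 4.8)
The plan is to prove the bound directly by exhibiting $|S|$ distinct elements of $\Pi(S)$ via a telescoping chain of partial products, adapting the classical abelian argument to the non-abelian setting. Fix any one ordering $S = g_1 \bm\cdot \ldots \bm\cdot g_{\ell}$ with $\ell = |S|$, and for each $k \in [1,\ell]$ set $P_k = g_1 g_2 \cdots g_k \in G$. Since $P_k$ is the product of the subsequence $g_1 \bm\cdot \ldots \bm\cdot g_k \mid S$ read in the indicated order (i.e., via the identity permutation), we have $P_k \in \pi(g_1 \bm\cdot \ldots \bm\cdot g_k) \subseteq \Pi_k(S) \subseteq \Pi(S)$. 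Thus $P_1, \ldots, P_{\ell}$ all lie in $\Pi(S)$, and everything reduces to showing that these $\ell$ elements are pairwise distinct.

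The key step is that distinctness follows from the product-one free hypothesis by \emph{left} cancellation, and this works verbatim without commutativity. Suppose $P_i = P_j$ for some $1 \le i < j \le \ell$. Then $g_1 \cdots g_i = g_1 \cdots g_j$, and multiplying on the left by $(g_1 \cdots g_i)^{-1}$ yields $1 = g_{i+1} g_{i+2} \cdots g_j$. The right-hand side is the product of the nonempty subsequence $g_{i+1} \bm\cdot \ldots \bm\cdot g_j \mid S$ taken in the indicated order, so $1 \in \pi(g_{i+1} \bm\cdot \ldots \bm\cdot g_j) \subseteq \Pi(S)$, contradicting the assumption that $S$ is product-one free. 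Hence $P_1, \ldots, P_{\ell}$ are $\ell$ distinct elements of $\Pi(S)$, and therefore $|\Pi(S)| \ge \ell = |S|$, as claimed.

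There is no deep obstacle here, but the point requiring care is precisely the noncommutativity: one must use one-sided cancellation and the single fixed ordering throughout, rather than rearranging terms, since in a general group $P_i = P_j$ does not let us cancel from both sides or permute factors freely. The argument is clean because $\Pi(S)$ is defined to collect products over \emph{all} orderings of \emph{all} subsequences, so the collapsed block $g_{i+1}\cdots g_j$ is automatically a legitimate element of $\Pi(S)$ as the identity-permutation product of the subsequence $g_{i+1} \bm\cdot \ldots \bm\cdot g_j$. Consequently a single chosen ordering of $S$ suffices, and no optimization over orderings is needed.
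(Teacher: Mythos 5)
Your proof is correct and complete. Note that the paper itself offers no proof of this lemma --- it is imported verbatim from \cite{GLP2014} --- and your telescoping prefix-product argument (fix one ordering, take the $\ell$ partial products $P_k$, and use one-sided cancellation so that $P_i=P_j$ with $i<j$ would force the nonempty block $g_{i+1}\cdots g_j$ to have product $1$, contradicting product-one freeness) is precisely the standard proof of the cited result, valid in any finite group since only left cancellation and the identity permutation are used.
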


\section{Proof of Theorem \ref{maintheorem}}
Throughout this section, let $G=\langle x, y| x^p=y^m=1, x^{-1}yx=y^r\rangle\cong C_p\ltimes C_m$, where $p$ is the smallest prime divisor of $|G|$, and $\mbox{gcd}(p(r-1), m)=1$. Let $K=\langle x\rangle$ and $N=\langle y\rangle$. Then $C_{m}\cong N\lhd G$ and $K\cong G/N\cong C_p$. Let $\varphi$ be the canonical homomorphism from $G$ onto $G/N$. Then for each sequence $T$ over $G$, $\varphi(T)$ is a sequence over $G/N$. Note that if $\varphi(T)$ is a product-one sequence over $G/N$, then $\pi(T)\subseteq N$, as $\varphi(\pi(T))=\pi(\varphi(T))=1$. We first prove a few useful lemmas.

\begin{lemma}\label{basic} Let $M$ be any subgroup of $N=\langle y\rangle$, $u$ be an element of $N$ and $0\leq s<s'\leq p-1$. Then
\begin{itemize}
\item[(i)] If $u^{r^s}\in M$, then $\{u, u^{r},\ldots, u^{r^{p-1}}\}\subseteq M$.
\item[(ii)] If both $u^{r^s}$ and $u^{r^{s'}}$ are in the same coset of $M$, then $\{u, u^{r},\ldots, u^{r^{p-1}}\}\subseteq M$.
\item[(iii)] If $u\neq 1$, then $u^{r^s}\neq u^{r^{s'}}$.
\end{itemize}
\end{lemma}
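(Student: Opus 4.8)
The plan is to reduce all three parts to a single number-theoretic fact about $r$ modulo $m$, after which each item becomes a short computation with element orders in the cyclic group $N\cong C_m$. First I would record the relevant action: since $x^{-1}yx=y^r$, for every $u=y^j\in N$ we have $x^{-1}ux=u^r$, so conjugation by $x$ is the automorphism $\sigma\colon u\mapsto u^r$ of $N$. From $x^p=1$ we get $\sigma^p=\mathrm{id}$, i.e. $u^{r^p}=u$ for all $u\in N$; taking $u=y$ gives $r^p\equiv 1\pmod m$, whence $r$ is a unit modulo $m$ and $\gcd(r^s,m)=1$ for every $s$. The standing hypothesis $\gcd(p(r-1),m)=1$ splits into $\gcd(p,m)=1$ and $\gcd(r-1,m)=1$.

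The main obstacle is the following claim, which I would prove next: $\gcd(r^t-1,m)=1$ for every $t\in[1,p-1]$. Suppose some prime $q$ divides both $m$ and $r^t-1$. As $\gcd(r,m)=1$, $r$ is a unit modulo $q$; let $o$ be its multiplicative order. Then $r^t\equiv 1\pmod q$ gives $o\mid t$, while $r^p\equiv 1\pmod m$ gives $r^p\equiv 1\pmod q$, so $o\mid p$. Since $p$ is prime, $o\in\{1,p\}$. If $o=1$ then $q\mid r-1$, contradicting $\gcd(r-1,m)=1$; if $o=p$ then $p\mid t$, impossible for $1\le t\le p-1$. This contradiction proves the claim. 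As an immediate corollary, for $0\le s<s'\le p-1$ one has $r^s-r^{s'}=r^s(1-r^{s'-s})$ with $\gcd(r^s,m)=1$ and $s'-s\in[1,p-1]$, so $\gcd(r^s-r^{s'},m)=1$.

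Finally I would deduce the three statements, writing $d:=|M|$ and using that, since $N$ is cyclic, $M=\{v\in N:v^d=1\}$. For (i): $u^{r^s}\in M$ gives $u^{r^sd}=1$, so $\ord(u)\mid r^sd$; because $\ord(u)\mid m$ and $\gcd(r^s,m)=1$ we have $\gcd(\ord(u),r^s)=1$, hence $\ord(u)\mid d$, i.e. $u\in M$, and then every power $u^{r^t}$ lies in $M$ since $M$ is a subgroup. For (ii): the elements $u^{r^s}$ and $u^{r^{s'}}$ lying in one coset of $M$ means $u^{r^s-r^{s'}}\in M$; the same order argument, now using $\gcd(r^s-r^{s'},m)=1$ from the key step, forces $\ord(u)\mid d$, so $u\in M$ and we finish as in (i). For (iii): if $u^{r^s}=u^{r^{s'}}$ then $\ord(u)\mid r^s-r^{s'}$, and combined with $\ord(u)\mid m$ and $\gcd(r^s-r^{s'},m)=1$ this forces $\ord(u)=1$, i.e. $u=1$, contrary to hypothesis.

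The only nonroutine ingredient is the key step, whose proof relies essentially on $p$ being prime together with $\gcd(r-1,m)=1$; every other part is bookkeeping with orders in a cyclic group, so I expect the writeup to be short once the divisibility claim is in hand.
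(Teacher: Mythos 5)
Your proof is correct. Its crux coincides with the paper's: the observation that for any prime $q\mid m$, the multiplicative order of $r$ modulo $q$ divides $p$ and is not $1$ (because $\gcd(r-1,m)=1$), hence equals $p$, so $r^t\not\equiv 1\pmod q$ for $t\in[1,p-1]$ --- this is exactly the argument the paper embeds inside its proof of part (ii). What differs is the packaging. You frontload that argument into a standalone claim, $\gcd(r^t-1,m)=1$ for all $t\in[1,p-1]$ (hence $\gcd(r^s-r^{s'},m)=1$), and then deduce (i), (ii), (iii) independently and symmetrically via element orders, using that a subgroup of the cyclic group $N$ of order $d$ is exactly $\{v\in N: v^d=1\}$. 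The paper instead converts coprimality into membership by Bezout's identity ($1=r^sk+me$ gives $u=(u^{r^s})^k\in M$), proves (i) first, threads (ii) through two applications of (i) (once to strip the unit factor $r^s$, once after establishing the coprimality of $1-r^{s'-s}$ with $m$), and gets (iii) by applying (ii) to the trivial subgroup. Your organization makes the logical dependence cleaner (one number-theoretic lemma, three immediate corollaries) and has the small additional merit of deriving $r^p\equiv 1\pmod m$ from the relations $x^p=1$ and $x^{-1}yx=y^r$ rather than taking it for granted; the paper's chained version is marginally more self-contained in that it never invokes the classification of subgroups of a cyclic group. In substance the two proofs are the same length and the same difficulty.
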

\begin{proof} (i) Since $\mbox{gcd}(r,m)=1$, we have $\mbox{gcd}(r^s, m)=1$. Therefore, there exist $k, e\in \mathbb{Z}$ such that $1=r^sk+me$.  From $u\in N$ we have $u^m=u^{|N|}=1$. If $u^{r^s}\in M$, then $u=u^{r^sk+me}=(u^{r^s})^k\in M$. Thus $\{u, u^{r},\ldots, u^{r^{p-1}}\}\subseteq M$.

(ii) Since both $u^{r^s}$ and $u^{r^{s'}}$ are in the same coset of $M$, $u^{r^s-r^{s'}}\in M$, and thus $u^{(1-r^{s'-s})r^s}\in M$. Since $u^{1-r^{s'-s}}\in N$, by (i) we conclude that $u^{1-r^{s'-s}}\in M$. Next we show that $\mbox{gcd}(1-r^{s'-s},m)=1$. Assume to the contrary that $\mbox{gcd}(1-r^{s'-s},m)\neq 1$. Then there exists a prime divisor $q$ of $m$ such that $1-r^{s'-s}\equiv 0 \pmod q$. Since $\mbox{gcd}(r-1,m)=1$, we have $r\not\equiv 1 \pmod q$. Since $r^p\equiv 1\pmod m$, we have $r^p\equiv 1\pmod q$, which together with $r\not\equiv 1 \pmod q$ and $r^{s'-s}\equiv 1 \pmod q$ gives $s'-s \equiv 0\pmod p$. Thus $s=s'$, yielding a contradiction. Hence, we must have $\mbox{gcd}(1-r^{s'-s},m)=1$. As in (i), we obtain $u\in M$, and thus $\{u, u^{r},\ldots, u^{r^{p-1}}\}\subseteq M$.

(iii) Assume to the contrary that $u^{r^s}=u^{r^{s'}}$. Then both $u^{r^s}$ and $u^{r^{s'}}$ are in the same coset of the trivial subgroup $\{1\}$. By (ii), $u\in \{1\}$, so $u=1$, yielding a contradiction.
\end{proof}

\begin{lemma}\label{conjugationone} Let $T=g_1\bm\cdot \ldots \bm\cdot g_t$ be a sequence over $G$ such that $\varphi (T)$ is a minimal product-one sequence over $G/N$. Then for any $u\in\pi(T)$, we have $\pi(T)\supseteq \{u^{r^{s_{1}}}, u^{r^{s_{2}}}, \ldots, u^{r^{s_{t}}}\}$ for some subset $\{s_1,\ldots, s_t\}\subseteq [0,p-1]$. Moreover, if $u\neq 1$, then $u^{r^{s_i}}\neq u^{r^{s_j}}$ for all $1\leq i<j\leq t\leq p$.
\end{lemma}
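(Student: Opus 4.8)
The plan is to exploit the interplay between cyclic permutations of $T$ and conjugation in $G$. First I would record the elementary structural facts: writing each term as $g_i=x^{k_i}y^{b_i}$ with $k_i\in[0,p-1]$ determined by $\varphi(g_i)=\overline{x}^{\,k_i}$, and using the defining relation in the form $y^{a}x^{j}=x^{j}y^{ar^{j}}$ together with $r^{p}\equiv 1\pmod m$ (which follows from $x^{p}=1$, so that the exponents below depend only on their residues modulo $p$). Since $\varphi(T)$ is product-one, every product of $T$ lies in $N$, so any $u\in\pi(T)$ can be written as $u=y^{c}$ with $u=g_{\tau(1)}\cdots g_{\tau(t)}$ for a suitable permutation $\tau$ of $[1,t]$.

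The core observation is that a cyclic shift of a product is a conjugate of it, and hence still lies in $\pi(T)$. Setting $h=g_{\tau(1)}\cdots g_{\tau(i)}$ and $K_i=k_{\tau(1)}+\cdots+k_{\tau(i)}$ (with $K_0=0$), the cyclically shifted product $g_{\tau(i+1)}\cdots g_{\tau(t)}g_{\tau(1)}\cdots g_{\tau(i)}$ equals $h^{-1}uh$. Since $h\in x^{K_i}N$ and $N$ is abelian, a direct computation gives $h^{-1}y^{c}h=y^{\,cr^{K_i}}=u^{r^{K_i}}$. Running $i$ over $0,1,\ldots,t-1$ therefore yields $\{u^{r^{K_0}},u^{r^{K_1}},\ldots,u^{r^{K_{t-1}}}\}\subseteq\pi(T)$; reducing the $K_i$ modulo $p$ produces the candidate exponent set $\{s_1,\ldots,s_t\}\subseteq[0,p-1]$.

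The step I expect to carry the real weight is proving that $K_0,K_1,\ldots,K_{t-1}$ are pairwise distinct modulo $p$, since this is exactly what makes $\{s_1,\ldots,s_t\}$ a genuine $t$-element subset of $[0,p-1]$ and simultaneously forces $t\le p$. Here I would invoke minimality of $\varphi(T)$: a coincidence $K_i\equiv K_j\pmod p$ with $0\le i<j\le t-1$ gives $k_{\tau(i+1)}+\cdots+k_{\tau(j)}\equiv 0\pmod p$, so the subsequence $g_{\tau(i+1)}\bm\cdot\ldots\bm\cdot g_{\tau(j)}$ maps under $\varphi$ to a proper, nonempty product-one subsequence of $\varphi(T)$, contradicting that $\varphi(T)$ is a minimal product-one sequence. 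Finally, the ``moreover'' clause follows at once: with the $s_i$ distinct in $[0,p-1]$ and $u\neq 1$, Lemma~\ref{basic}(iii) guarantees that the elements $u^{r^{s_i}}$ are pairwise distinct. The degenerate case, in which $\varphi(T)$ is the length-one product-one sequence (so $t=1$ and $g_1\in N$), is handled trivially by taking $s_1=0$, and the displayed inclusion as well as the distinctness claim hold vacuously.
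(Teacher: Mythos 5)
Your proof is correct and follows essentially the same route as the paper's: both realize the elements $u^{r^{s_i}}$ as cyclic shifts of the product $u$, which are conjugates $h^{-1}uh$ with $h$ a partial product lying in $x^{K_i}N$, and both use minimality of $\varphi(T)$ to show the partial exponent sums are pairwise distinct modulo $p$, finishing with Lemma~\ref{basic}(iii). The only cosmetic differences are that the paper relabels so the permutation is the identity and deduces $t\leq p$ directly from $\mathsf d(C_p)=p-1$, whereas you keep $\tau$ explicit and get $t\leq p$ as a byproduct of the distinct residues.
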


\begin{proof} Without loss of generality, we may assume that $u=g_1\ldots g_t$. Since $\varphi(T)$ is a minimal product-one sequence over $G/N\cong C_p$, we conclude that $t\leq p$ and $u=y^k\in \pi(T)$ for some $k\in [0,m-1]$. Let $g_1\ldots g_i=x^{s_i}y^{c_i}$, where $1\leq i\leq t$, $s_i\in[0,p-1]$ and $c_i\in [0, m-1]$. Since $u=x^{s_t}y^{c_t}=y^k$, we have $s_t=0$ and $c_t=k$. Let $u_i=(g_{1}\ldots g_{i})^{-1}u(g_1\ldots g_{i})$, for each $i\in [1,t]$. Then $u_t=u\in\pi(T)$ and $u_i=g_{i+1}g_{i+2}\ldots g_{t}g_1\ldots g_i\in \pi(T)$ for $i\in[1,t-1]$. Therefore,
$$u_i=(g_{1}\ldots g_{i})^{-1}u(g_1\ldots g_{i})=(x^{s_i}y^{c_i})^{-1} y^{k} (x^{s_i}y^{c_i})=x^{-s_i} y^{k} x^{s_i}=u^{r^{s_i}}$$ where $i\in [1,t]$. Thus $\pi(T)\supseteq \{u^{r^{s_{1}}}, u^{r^{s_{2}}}, \ldots, u^{r^{s_{t}}}\}$. We next prove $s_i\neq s_j$ for all $1\leq i<j\leq t$. Assume to the contrary that $s_i=s_j$ for some $1\leq i<j\leq t$. Then $g_{i+1}\ldots g_j=(g_1\ldots g_i)^{-1}(g_1\ldots g_j)=(x^{s_i}y^{c_i})^{-1}x^{s_j}y^{c_j}=x^{s_j-s_i}y^{c_j-c_ir^{s_j-s_i}}=y^{c_j-c_i}\in N$. Thus $\varphi(g_{i+1}\ldots g_j)=1$, yielding a contradiction to the proposition that $\varphi(T)$ is a minimal product-one sequence over $G/N$. This proves the first result of the lemma. The final statement follows immediately from Lemma~\ref{basic}~(iii).
\end{proof}
We remark that the following lemma is a slight generalization of \cite[Lemma 16]{Bass2007} and the same proof for \cite[Lemma 16]{Bass2007} carries over. Let $N_i=x^iN$ be the $i$th coset of $N$ in $G$ for $0\leq i\leq p-1$.

\begin{lemma}\label{conjugationtwo}
Let $T_0$ be a sequence of $p$ elements in $N_i$ for some $i\in [1,p-1]$. For every $j\in [1,\ell]$, let $T_j$ be a sequence over $G$ such that $\pi(T_j)\cap N\neq \emptyset$, and let $u_j\in \pi(T_j)\cap N$. Then, for every $t\in[1,\ell]$, $\pi(T_0\bm\cdot T_1\bm\cdot\ldots\bm\cdot T_{t})$ contains the product set $\pi(T_0)\{u_1,u_1^{r},\ldots,u_1^{r^{p-1}}\}\ldots \{u_t,u_t^{r},\ldots,u_t^{r^{p-1}}\}$.
\end{lemma}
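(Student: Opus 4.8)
The plan is to reduce everything to one elementary fact about conjugation in $G$: for any $w \in N$ and any $g$ in the coset $N_i = x^i N$ we have $g^{-1} w g = w^{r^i}$, since writing $g = x^i y^d$ and $w = y^c$ gives $y^{-d} x^{-i} y^c x^i y^d = y^{c r^i}$. Because $i \in [1, p-1]$ and $p$ is prime, $\gcd(i,p) = 1$, so sliding an element of $N$ rightward past successive terms of $T_0$ (all of which lie in $N_i$) makes the exponent of $r$ sweep out a complete residue system modulo $p$; this is precisely the mechanism that will generate all $p$ conjugates $U_j := \{u_j, u_j^r, \ldots, u_j^{r^{p-1}}\} = \{x^{-a} u_j x^a : a \in [0, p-1]\}$.

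First I would fix an arbitrary ordering $h_{\sigma(1)} \bm\cdot \ldots \bm\cdot h_{\sigma(p)}$ of $T_0$, with product $w = h_{\sigma(1)} \cdots h_{\sigma(p)}$; note $w \in N$ because $T_0 \subseteq N_i$ and $x^{pi} = 1$, so $w \in \pi(T_0) \cap N$. For each $j \in [1,t]$ I would fix an ordering of $T_j$ whose product is $u_j$ and choose a gap index $k_j \in [0, p-1]$, then build one global ordering of all terms of $T_0 \bm\cdot T_1 \bm\cdot \ldots \bm\cdot T_t$ by inserting the block $T_j$ into the gap immediately following $h_{\sigma(k_j)}$ (blocks sharing a gap being ordered arbitrarily). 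The product of this ordering certainly lies in $\pi(T_0 \bm\cdot T_1 \bm\cdot \ldots \bm\cdot T_t)$. I would then evaluate it by collapsing each block to its value $u_j \in N$ and sliding each $u_j$ to the far right: passing one term of $N_i$ turns $u_j$ into $u_j^{r^i}$, while passing another $u_{j'} \in N$ changes nothing, so after $u_j$ crosses the $p - k_j$ terms of $T_0$ lying to its right its contribution is $u_j^{r^{(p-k_j)i}}$, and the product equals $w \cdot \prod_{j=1}^{t} u_j^{r^{(p - k_j)i}}$.

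Finally I would extract independence. Setting $a_j := (p - k_j) i \bmod p$, the map $k_j \mapsto a_j$ is a bijection of $[0,p-1]$ since $\gcd(i,p) = 1$, the $k_j$ are chosen independently, and the trailing factors $u_j^{r^{a_j}}$ lie in the abelian group $N$ and so commute; hence as the $k_j$ and the orderings vary the products realize every element of $w\, U_1 \cdots U_t$. Letting $w$ range over $\pi(T_0)$ gives $\pi(T_0 \bm\cdot T_1 \bm\cdot \ldots \bm\cdot T_t) \supseteq \pi(T_0)\, U_1 \cdots U_t$, which is the assertion. The only genuinely delicate points are bookkeeping: confirming that conjugating $u_j$ across the length-$(p-k_j)$ tail of $T_0$ contributes exactly the exponent $r^{(p-k_j)i}$ — this is where it matters that all terms of $T_0$ sit in the same coset $N_i$ — and that $k_j \mapsto (p-k_j)i \bmod p$ is a bijection, which is exactly where $i \not\equiv 0 \pmod p$ and the primality of $p$ are used. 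An induction on $t$ would also work, but handling all blocks simultaneously sidesteps having to track where the terms of $T_0$ end up after earlier insertions.
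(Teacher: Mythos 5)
Your proof is correct and takes essentially the same approach as the paper: the paper proves this lemma by simply noting that the argument of \cite[Lemma 16]{Bass2007} carries over, and that argument is exactly your insertion/conjugation scheme --- order $T_0$ to realize a given $w\in\pi(T_0)$, insert each block $T_j$ (ordered to have product $u_j$) into one of the $p$ gaps among the terms of $T_0$, and slide the resulting element of $N$ to the right, picking up the conjugation $u_j\mapsto u_j^{r^{(p-k_j)i}}$, with $\gcd(i,p)=1$ making these exponents sweep all residues mod $p$. The only cosmetic difference is that you treat all blocks simultaneously instead of inducting on $t$.
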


\begin{remark}\label{mequiv1}
$m\equiv 1\pmod p$.
\end{remark}
\begin{proof}
Since $r^{p}\equiv1 \pmod m$, we have $r^{p}\equiv1 \pmod q$ for every prime $q|m$. Since $\mbox{gcd}(r-1,m)=1$, we have $r\not\equiv1 \pmod q$. Therefore, $r$ has order $p$ modulo $q$ and thus $p|q-1$ for every prime $q|m$. Now, clearly $m\equiv1 \pmod p$.
\end{proof}

The next result gives the exact value of the small Davenport constant $\mathsf d(G)$.

\begin{proposition}\label{d(G)} $\mathsf d(G)=m+p-2$.
\end{proposition}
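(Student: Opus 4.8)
The plan is to establish $\mathsf d(G) = m+p-2$ by proving the two inequalities $\mathsf d(G) \geq m+p-2$ and $\mathsf d(G) \leq m+p-2$ separately. For the lower bound, I would exhibit an explicit product-one free sequence of length $m+p-2$. The natural candidate is $S = y^{[m-1]} \bm\cdot x^{[p-1]}$, that is, the element $y$ repeated $m-1$ times together with the element $x$ repeated $p-1$ times. I would verify that $S$ has no nontrivial product-one subsequence: any subsequence product has the form $x^{a} \cdot (\text{power of } y)$ where $a$ counts the number of $x$'s used; to land in the identity one needs the $x$-exponent to vanish modulo $p$, forcing $a \in \{0, p-1\cdot(\text{impossible since } a \leq p-1)\}$, so effectively $a = 0$, and then one is left with a product of at most $m-1$ copies of $y$, which cannot be trivial since $y$ has order $m$. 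This confirms $\mathsf d(G) \geq m+p-2$.

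For the upper bound $\mathsf d(G) \leq m+p-2$, I would take an arbitrary product-one free sequence $T$ over $G$ and show $|T| \leq m+p-2$. The key structural tool is the homomorphism $\varphi : G \to G/N \cong C_p$. I would push $T$ down to $\varphi(T)$, a sequence over the cyclic group $C_p$ of order $p$. Since $\mathsf d(C_p) = p-1$, if $|\varphi(T)| \geq p$ then $\varphi(T)$ contains a nontrivial product-one subsequence, and by repeatedly extracting minimal product-one subsequences over $C_p$, I can decompose a large part of $T$ into blocks $T_1, \ldots, T_k$ whose images are minimal product-one in $C_p$, with a remainder whose image is product-one free over $C_p$ (hence of length at most $p-1$). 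Each block $T_j$ satisfies $\pi(T_j) \subseteq N$, so it produces an element $u_j \in N$; by Lemma~\ref{conjugationone} the set $\pi(T_j)$ actually contains the full conjugacy-orbit $\{u_j, u_j^{r}, \ldots, u_j^{r^{|T_j|-1}}\}$ of distinct elements when $u_j \neq 1$.

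The heart of the argument is then to bound the number of such blocks. Using Lemma~\ref{conjugationtwo} together with Lemma~\ref{genKneser} (the generalization of Kneser's theorem), I would estimate the size of the product set $\{u_1, u_1^r, \ldots\} \cdots \{u_k, u_k^r, \ldots\}$ inside the abelian group $N \cong C_m$. The point is that if there are too many blocks, this product set would exhaust all of $N$, in particular it would contain the identity, yielding a product-one subsequence of $T$ and contradicting product-one freeness. Concretely, each nontrivial $u_j$ contributes an orbit of size at least $2$ (in fact of size $|T_j| \geq 2$ when the block has length $\geq 2$), and Kneser's theorem controls how these orbits sum up. Counting the total length contributed by the blocks against the size $m$ of $N$, combined with the at-most-$(p-1)$ leftover from the product-one free remainder over $C_p$, should yield $|T| \leq m+p-2$.

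I expect the main obstacle to be the Kneser-theoretic estimate controlling the blocks that map to the identity in $C_p$, i.e., handling the elements of $T$ lying in $N$ itself and the blocks $T_j$ with $u_j = 1$ or with short length. One must carefully track the stabilizer $H = \mbox{stab}(\Pi^{m}(\mathbf{A}))$ appearing in Lemma~\ref{genKneser}: since $\gcd(p(r-1),m)=1$ and by Lemma~\ref{basic} any coset of a subgroup of $N$ containing one conjugate must contain the whole orbit, the stabilizer is forced to be either trivial or all of $N$, and ruling out the degenerate case is where the hypothesis $\gcd(p(r-1),m)=1$ does the essential work. Balancing the contribution of the $N$-part (bounded by $m-1$, since $\mathsf d(N) = \mathsf d(C_m) = m-1$) against the $C_p$-part (bounded by $p-1$) to land exactly on $m+p-2$, rather than a weaker bound, is the delicate bookkeeping step.
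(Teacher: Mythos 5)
Your skeleton is the paper's skeleton --- the same extremal sequence $x^{[p-1]}\bm\cdot y^{[m-1]}$ for the lower bound (your verification of it is correct), and for the upper bound the push-down to $G/N\cong C_p$, the extraction of blocks with minimal product-one image, Lemma~\ref{conjugationone}, and a Kneser estimate. But the quantitative core of the upper bound has a genuine gap. You apply Lemma~\ref{genKneser} only to the product of the block orbits $\{u_1,u_1^r,\ldots\}\cdots\{u_k,u_k^r,\ldots\}$, and you account for the terms lying in $N$ separately (``bounded by $m-1$, since $\mathsf d(C_m)=m-1$'') and for the leftover separately (``bounded by $p-1$''). This additive bookkeeping cannot reach $m+p-2$: for a product-one free sequence $S$ of length $m+p-1$, Lemma~\ref{zerofree} gives $|S_N|\le m-1$ and the leftover has length at most $p-1$, so the blocks have total length at least $1$ --- that is, there is at least one block, but possibly \emph{exactly one}, of length at most $p$. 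Its orbit has at most $p<m$ elements, so it neither exhausts $N$ nor contains $1$; your ``too many blocks exhaust $N$'' mechanism never fires, and nothing in your outline produces a contradiction in this case. The contradiction has to come from multiplying that single block's orbit against the up-to-$(m-1)$ terms of $S_N$, which is exactly what the paper does: it feeds Lemma~\ref{genKneser} the \emph{joint} sequence of sets $A_1=\{u_1^{r^{s_{11}}},\ldots,u_1^{r^{s_{1t_1}}}\}$ (the only set not containing $1$), $A_i=\{1\}\cup(\mbox{orbit of } u_i)$ for the remaining blocks, and $A_i=\{1,u_i\}$ for each term $u_i$ of $S_N$; this yields $\sum_i|A_i|\ge m+\ell-1$, hence $|\Pi^{\ell}(\mathbf{A})|\ge m$, hence $\Pi^{\ell}(\mathbf{A})=N\ni 1$, and since $1\notin A_1$ the representation of $1$ uses a nonempty subsequence --- the contradiction. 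Without this joint estimate the argument stops short of the stated bound.

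Your structural claim about the stabilizer is also false: Lemma~\ref{basic} does \emph{not} force $M=\mbox{stab}(\Pi^{\ell}(\mathbf{A}))$ to be trivial or all of $N$. If, say, every $u_i$ lies in a proper subgroup $M_0\lneq N$, then the whole product set and its stabilizer sit inside $M_0$, so intermediate stabilizers genuinely occur and the hypothesis $\gcd(p(r-1),m)=1$ does not rule them out. The paper handles an arbitrary proper $M$ in three steps you would need to reproduce: (a) if $I_M=\{i: A_i\subseteq M\}$ has $|I_M|\ge |M|$, then, $M$ being cyclic with $\mathsf d(M)=|M|-1$, the elements $u_i$ with $i\in I_M$ already produce a product-one subsequence, so one may assume $|I_M|\le |M|-1$; (b) by Lemma~\ref{basic}(i),(ii), each $A_i$ with $i\notin I_M$ meets $|A_i|$ \emph{distinct} cosets of $M$; (c) the count closes with $|M|\bigl(m-p(|M|-1)\bigr)=(|M|-1)(m-p|M|)+m\ge m$, which relies on $m>p|M|$ --- and it is here, not in any trivial-or-full dichotomy, that the hypothesis that $p$ is the smallest prime divisor of $|G|$ does its work (every prime divisor of $m$ exceeds $p$). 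Finally, a smaller slip: Lemma~\ref{conjugationtwo} is not applicable in this proof, since it requires $p$ terms lying in a single coset $N_i$ with $i\neq 0$; the cross-block products are justified directly, because all chosen elements lie in $N$ and concatenating the orderings realizing $u_i'\in\pi(T_i)$ realizes their product in $\pi$ of the concatenated blocks.
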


\begin{proof} Let $S_0=x^{[p-1]}\bm\cdot y^{[m-1]}$ be a sequence over $G$ with length $m+p-2$. Then $1\notin\Pi(S_0)$, so we have $\mathsf d(G)\geq m+p-2$. Next, let $S$ be any sequence over $G$ of length $m+p-1$. We show that $1\in\Pi(S)$. Assume to the contrary that $1\notin\Pi(S)$. Then $1\notin \Pi(S_N)$. This together with Lemma \ref{zerofree} implies that $m-1\geq |\Pi(S_N)|\geq |S_N|$. Let $T=S\bm\cdot S_N^{[-1]}$. Then $|T|=|S|-|S_N|\geq p$. Since $|\varphi(T)|=|T|\geq p$ and $\mathsf d(G/N)=\mathsf d(C_p)=p-1$, there exists a subsequence $T_1\mid T$ such that $\varphi(T_1)$ is a minimal product-one sequence over $G/N$. Let $t$ be the maximal integer such that $T_1\bm\cdot \ldots \bm\cdot T_t\mid T$ and $\varphi(T_i)$ is a minimal product-one subsequence over $G/N$. Since $\varphi(T\bm\cdot(T_1\bm\cdot \ldots \bm\cdot T_t)^{[-1]})$ is product-one free and $\mathsf d(G/N)=p-1$, we have $|T\bm\cdot(T_1\bm\cdot \ldots \bm\cdot T_t)^{[-1]}|=|\varphi(T\bm\cdot(T_1\bm\cdot \ldots \bm\cdot T_t)^{[-1]})|\leq p-1$.

Let $u_i\in \pi(T_i)$ for all $i\in [1,t]$. Since $1\notin \Pi(S)$, we have $1\notin\pi(T_i)$ and thus $$u_i\neq 1 \mbox{ for all } i\in[1,t].\ \ \ \ \ \ \ \ \ \ \ \ \ \ (\Delta)$$  By Lemma~\ref{conjugationone}, $$\pi(T_i)\supseteq \{u_i^{r^{s_{i1}}},u_i^{r^{s_{i2}}},\ldots,u_i^{r^{s_{it_i}}}\}$$ for all $i\in [1,t]$, $t_i=|T_i|$, and moreover, $u_i^{s_{ij}}\neq u_i^{s_{ik}}$ for $1\leq j<k\leq t_i$. Let $S_N=u_{t+1}\bm\cdot\ldots\bm\cdot u_{\ell}$ where $u_i\in N$, $i\in[t+1, \ell]$, $\ell=t+|S_N|$. Since $1\notin\Pi(S_N)$, $u_i\neq 1$ for all $i\in [t+1,\ell]$. This, together with $(\Delta)$, gives that $u_i\neq 1$ for all $i\in [1,\ell]$. Let $A_1=\{ u_1^{r^{s_{11}}},u_1^{r^{s_{12}}},\ldots,u_1^{r^{s_{1t_1}}}\}$, $A_i=\{1, u_i^{r^{s_{i1}}},u_i^{r^{s_{i2}}},\ldots,u_i^{r^{s_{it_i}}}\}$ where $i\in [2,t]$, and $A_i=\{1, u_i\}$ where $i\in [t+1,\ell]$. By Lemma~\ref{conjugationone}, we have
\[
 |A_i|= \left\{ \begin{array}{lll} & |T_1|, &i=1; \\ &  |T_i|+1, &i\in[2, t]; \\ & 2, &i\in[t+1, \ell].
\end{array} \right.
\]
Thus
\begin{align*}
\sum_{i=1}^{\ell}|A_i|&=\sum_{i=1}^{t}|T_i|+t-1+2(\ell-t)\\
&\geq (|T|-p+1)+\ell+|S_N|-1 \ \   (\mbox{as } |S_N|=\ell-t)\\
&=|S|+\ell-p\ \   (\mbox{as } |S|=|T|+|S_N|)\\
&= m+\ell-1 \ \   (\mbox{as } |S|=m+p-1).
\end{align*}

Let $\mathbf{A}=(A_1,\ldots, A_{\ell})$ and $M=\mbox{stab}(\Pi^{\ell}(\mathbf{A}))$. Since $A_i\subseteq N$ for all $i\in [1,\ell]$, we have $\Pi^{\ell}(\mathbf{A})\subseteq N$ and $M\subseteq N$. By Lemma~\ref{genKneser},
\begin{align*}
|\Pi^{\ell}(\mathbf{A})|&\geq |M|\Big(1-\ell+\sum_{Q\in N/M}\min\{\ell,|\{i\in[1,\ell]: A_i\cap Q\neq \emptyset\}|\}\Big) \\
&= |M|\Big(1-\ell+\sum_{Q\in N/M} |\{i\in[1,\ell]: A_i\cap Q\neq \emptyset\}|\Big) \\
&= |M|\Big(1-\ell+\sum_{Q\in N/M}\sum_{i\in[1,\ell], A_i\cap Q\neq \emptyset}1\Big)\\
&= |M|\Big(1-\ell+\sum_{i\in[1,\ell]}\sum_{Q\in N/M, A_i\cap Q\neq \emptyset}1\Big).
\end{align*}
Let $I_M$ be the subset of $[1,\ell]$ such that $i\in I_M$ if and only if $A_i\subseteq M$. Since $M$ is a cyclic subgroup of $N$, if $|I_M|\geq |M|$, then there exists a subset $\{i_1,\ldots, i_v\}\subseteq I_M$ such that $1=u_{i_1} \ldots u_{i_v}\in \pi(T_{i_1}\bm\cdot \ldots \bm\cdot T_{i_v})$, yielding a contradiction. So we may always assume that $|I_M|\leq |M|-1$. We now show $1\in\Pi^{\ell}(\mathbf{A})$. Clearly, if $M=N$, then $\Pi^{\ell}(\mathbf{A})=N$ and thus $1\in\Pi^{\ell}(\mathbf{A})$ as desired. We may always assume that $M\lneq N$. By Lemma~\ref{basic}~(i) and (ii), if any two elements of $A_i$ are contained in the same coset of $M$, then $A_i\subseteq M$ where $i\in [1,\ell]$; i.e., $i\in I_M$. This means if $i\notin I_M$, then all the elements of $A_i$ are in $|A_i|$ different cosets of $M$. Therefore,
\begin{align*}
|\Pi^{\ell}(\mathbf{A})|&\geq |M|\Big(1-\ell+\sum_{i\in[1,\ell]\setminus I_M}\sum_{Q\in N/M, A_i\cap Q\neq \emptyset}1+\sum_{i\in I_M}\sum_{Q\in N/M, A_i\cap Q\neq \emptyset}1\Big)\\
&= |M|(1-\ell+\sum_{i\notin I_M}|A_i|+|I_M|)\\
&= |M|(1-\ell+\sum_{i=1}^{\ell}|A_i|-\sum_{i\in I_M}|A_i|+|I_M|)\\
&\geq |M|(1-\ell+\sum_{i=1}^{\ell}|A_i|-p|I_M|)\\
&\geq |M|(m-p(|M|-1))\\
&=(|M|-1)(m-p|M|)+m\\
&\geq m \ \ \ \ (\mbox{as } M\lneq N, \mbox{ and thus } m> p|M|).
\end{align*}
Thus $1\in N=\Pi^{\ell}(\mathbf{A})$. Then there exists $u_i'\in A_i$ for each $i\in [1,\ell]$ such that $u_1'\ldots u_{\ell}'=1$. Let $I=\{i\in [1,\ell]: u_i'\neq 1\}=\{i_1,\ldots, i_k\}$, where $1\leq i_1<\ldots<i_k\leq \ell$ and $k=|I|$. Then $i_1=1\in I$ and thus $1=u_{i_1}'\ldots u_{i_k}'\in \pi(T_{i_1}\bm\cdot\ldots\bm\cdot T_{i_k})\subseteq\Pi(S)$, yielding a contradiction. This completes the proof of the lemma.
\end{proof}

We are now in a position to prove our main result.
\\

\noindent {\bf Proof of Theorem \ref{maintheorem}.}

It follows from Proposition~\ref{d(G)} that $\mathsf d(G)=m+p-2$. By \cite[Lemma 4]{ZG2005}, $\mathsf E(G)\geq |G|+\mathsf d(G)=pm+m+p-2$. Let $S$ be a sequence over $G$ with length $pm+m+p-2$. To prove our result, it is sufficient to show that $1\in \Pi_{pm}(S)$. We divide the proof into the following two cases.
\\

\noindent {\bf Case 1} $|S_{N_i}|\geq p$ for some $i\in [1,p-1]$.

Then there exists a subsequence $T_0\mid S_{N_i}$ with $|T_0|=p$. Clearly, $\varphi(T_0)$ is a minimal product-one subsequence over $G/N$. Let $\ell$ be the maximal integer such that
$$T_0\bm\cdot T_1\bm\cdot\ldots\bm\cdot T_{\ell}\mid S,$$
where $\varphi(T_j)$ is a product-one subsequence over $G/N$ with $|T_j|=p$ for every $j\in[1,\ell]$. By rearranging the order of $T_1, T_2, \ldots, T_{\ell}$ if necessary, we may assume that
$\pi(T_j)\neq \{1\}$ for all $j\in[1,v]$ and $\pi(T_j)=\{1\}$ for $j\in [v+1,\ell]$. Since $G/N\cong C_p$, by the maximality of $\ell$ and Theorem~\ref{EGZ}, we have $|S\bm\cdot (T_0\bm\cdot T_1\bm\cdot\ldots\bm\cdot T_{\ell})^{[-1]}|=|\varphi(S)|-|\varphi(T_0\bm\cdot T_1\bm\cdot\ldots\bm\cdot T_{\ell})|\leq \mathsf E(G/N)-1=2p-2$. Therefore, $p\ell=|\varphi(T_0\bm\cdot T_1\bm\cdot\ldots\bm\cdot T_{\ell})|-p\geq |S|-2p-2-p\geq mp+m-2p$. By Remark~\ref{mequiv1}, $m\equiv 1\pmod p$. Thus $\ell\geq m+(m-1)/p-1$. Let $1\neq u_j\in\pi(T_j)$ for all $j\in [1,v]$ and $1=u_j\in\pi(T_j)$ for $j\in [v+1,\ell]$. By Lemma~\ref{conjugationtwo}, $$\pi(T_0\bm\cdot T_1\bm\cdot\ldots\bm\cdot T_{\ell})\supseteq \pi(T_0)\{u_1,u_1^{r},\ldots,u_1^{r^{p-1}}\}\ldots\{u_{\ell},u_{\ell}^{r},\ldots,u_{\ell}^{r^{p-1}}\}.$$ Let $A_0=\pi(T_0)$, $A_j=\{u_{j},u_{j}^{r},\ldots,u_{j}^{r^{p-1}}\}$ for $j\in [1,v]$, and $A_j=\{1\}$ for $j\in [v+1,\ell]$. Let $\mathbf{A}=(A_1,\ldots, A_{\ell})$. Then $$\Pi_{pm}(S)\supseteq A_0\Pi^{m-1}(\mathbf{A}).$$ Let $M=\mbox{stab}(\Pi^{m-1}(\mathbf{A}))$. By Lemma~\ref{genKneser}, $$|\Pi^{m-1}(\mathbf{A})|\geq |M|\Big(2-m+\sum_{Q\in N/M}\min\big\{m-1,|\{j\in[1,\ell]: A_j\cap Q\neq \emptyset\}|\big\}\Big).$$
Let $I_M$ be the subset of $[1,\ell]$ such that $j\in I_M$ if and only if $A_j\subseteq M$. Since $M$ is a cyclic subgroup of $N$, if $|I_M|\geq (m/|M|) |M|+|M|-1=m+|M|-1$, then by using Theorem~\ref{EGZ} repeatedly for $m/|M|$ times, we can find a subset $\{j_1,\ldots, j_m\}\subseteq I_M$ such that $1\in \pi(T_{j_1}\bm\cdot \ldots\bm\cdot T_{j_m})$. Since $|T_{j_1}\bm\cdot \ldots\bm\cdot T_{j_m}|=pm$, $1\in\Pi_{pm}(S)$ and we are done. We only need to consider the case when $|I_M|\leq m+|M|-2$.

We now show $|\Pi^{m-1}(\mathbf{A})|\geq m$. Note that if $M=N$, then $\Pi^{m-1}(\mathbf{A})=N$ and thus $|\Pi^{m-1}(\mathbf{A})|\geq m$ as desired. We may always assume that $M\lneq N$. Let $Q\in N/M$ and $V_Q=\{j\in[1,\ell]: A_j\cap Q\neq \emptyset\}$. Clearly, $V_M\supseteq I_M$, and by Lemma~\ref{basic}~(i) we have $V_M\subseteq I_M$, whence $V_M=I_M$. Moreover, if $Q\neq M$, then $V_Q\cap V_M=V_Q\cap I_M=\emptyset$. By Lemma~\ref{basic}~(ii), if $j\in V_Q$ for $Q\neq M$, then all the elements of $A_j$ are in $p$ different cosets of $M$.  Let
$$\mu=|\{Q\in N/M: |V_Q|\geq m\}|.$$
If $\mu=0$, then as in the proof of Proposition~\ref{d(G)}, we have
\begin{align*}
|\Pi^{m-1}(\mathbf{A})|\geq& |M|\Big(2-m+\sum_{i\in[1,\ell]\setminus I_M}\sum_{Q\in N/M, A_i\cap Q\neq \emptyset}1+\sum_{i\in I_M}\sum_{Q\in N/M, A_i\cap Q\neq \emptyset}1\Big)\\
\geq& |M|(2-m+p(\ell-|I_M|)+|I_M|)\\
=& 2-m+p\ell-(p-1)|I_M|\\
\geq& 2-m+(pm+m-p-1)-(p-1)(m-1))\\
&(\mbox{as } \ell\geq m+(m-1)/p-1 \mbox{ and } |I_M|=|V_M|\leq m-1)\\
\geq & m.
\end{align*}

If $\mu\geq 2$, then
\begin{align*}
|\Pi^{m-1}(\mathbf{A})|\geq |M|(2-m+2(m-1))\geq m.
\end{align*}

If $\mu=1$, then let $R\in N/M$ be the unique coset of $M$ such that $|V_R|\geq m$. Assume that $R\neq M$. Then $R=\alpha M$ for some $\alpha\in N\setminus M$. Let $\alpha_j\in A_j \cap R=A_j \cap \alpha M$ for all $j\in V_R$. Since $\alpha_j\in A_j=\{u_j, u_j^{r}\ldots, u_j^{r^{p-1}}\}$, $\alpha_j^{r}\in A_j$. Thus $\alpha_j^{r}\in A_j\cap \alpha^{r}M$ for all $j\in V_R$. Implying that for all $j\in V_R$, $j\in V_{\alpha^{r}M}$ and thus $|V_{\alpha_j^rM}|\geq |V_R|$. Since $\alpha\notin M$, by Lemma~\ref{basic}~(ii), $\alpha M\neq \alpha^r M$. So we have found another coset $\alpha^r M(\neq R)$ such that $|V_{\alpha_j^rM}|\geq |V_R|\geq m$, yielding a contradiction to $\mu=1$. Thus we must have $R=M$. Since $|V_{\{1\}}|=|I_{\{1\}}|\leq m-1$ and $|V_M|=|V_R|\geq m>|V_{\{1\}}|$, we have $M\neq \{1\}$. By Lemma~\ref{genKneser},
\begin{align*}
|\Pi^{m-1}(\mathbf{A})|&\geq |M|\Big(2-m+\sum_{Q\in N/M}\min\{m-1,|\{j\in[1,\ell]: A_j\cap Q\neq \emptyset\}|\}\Big) \\
&= |M|\Big(2-m+\sum_{Q\in N/M, Q\neq M} |\{j\in[1,\ell]: A_j\cap Q\neq \emptyset\}|+m-1\Big) \\
&= |M|\Big(1+\sum_{Q\in N/M} |\{j\in[1,\ell]: A_j\cap Q\neq \emptyset\}|-|V_M|\Big) \\
&= |M|\Big(1+\sum_{Q\in N/M}\sum_{j\in[1,\ell], A_j\cap Q\neq \emptyset}1-|I_M|\Big)\\
&= |M|\Big(1+\sum_{j\in[1,\ell]}\sum_{Q\in N/M, A_j\cap Q\neq \emptyset}1-|I_M|\Big)\\
&= |M|\Big(1+\sum_{j\in[1,\ell]\setminus I_M}\sum_{Q\in N/M, A_j\cap Q\neq \emptyset}1+\sum_{j\in I_M}\sum_{Q\in N/M, A_j\cap Q\neq \emptyset}1-|I_M|\Big)\\
&= |M|(1+p(\ell-|I_M|))\\
&\geq |M|(pm+m-p-p(m+|M|-2))\\
&\ \ \ \ (\mbox{as } \ell\geq m+(m-1)/p-1 \mbox{ and } |I_M|\leq m+|M|-2)\\
&= |M|(m-p(|M|-1))\\
&= (|M|-1)(m-p|M|)+m \\
&\geq m \ \ \ \ (\mbox{as } M\lneq N, \mbox{ and thus } m> p|M|).
\end{align*}
So in all the cases, we have shown $|\Pi^{m-1}(\mathbf{A})|\geq m$. Thus $1\in N= \Pi^{m-1}(\mathbf{A})= A_0\Pi^{m-1}(\mathbf{A})\subseteq\Pi_{pm}(S)$. This completes the proof of Case 1.
\\

\noindent {\bf Case 2} $|S_{N_i}|< p$ for all $i\in [1,p-1]$.

Then $|S\bm\cdot S_N^{[-1]}|=|S_{N_1}\bm\cdot\ldots\bm\cdot S_{N_{p-1}}|\leq (p-1)^2$. Let $n=\mathsf v_1(S)$, i.e., the number of times $1$ occurs in $S$. If $n\geq p+m-2$, then let $S'=S\bm\cdot 1^{[-(m+p-2)]}$ be the subsequence of $S$ obtained by removing $m+p-2$ terms of $1$ from $S$. By Proposition~\ref{d(G)}, $\mathsf d(G)=m+p-2$. Since $|S'|=|S|-(m+p-2)=pm\geq \mathsf d(G)+1$, we can find a product-one subsequence $T_1$ of $S'$. Let $t$ be the maximal integer such that $T_1\bm\cdot\ldots\bm\cdot T_t\mid S'$ and $T_i$ is product-one for $i\in [1,t]$. Then $|S'\bm\cdot(T_1\bm\cdot\ldots\bm\cdot T_t)^{[-1]}|\leq m+p-2$. Hence $pm-(m+p-2)\leq d\leq pm$ where $d=|T_1\bm\cdot\ldots\bm\cdot T_t|$, and thus $0\leq pm-d\leq m+p-2$. Therefore, $1\in \pi(T_1\bm\cdot\ldots\bm\cdot T_t)=\pi(T_1\bm\cdot\ldots\bm\cdot T_t\bm\cdot 1^{[pm-d]})$. Since $|T_1\bm\cdot\ldots\bm\cdot T_t\bm\cdot 1^{[pm-d]}|=pm$, $1\in \Pi_{pm}(S)$. Next we may always assume that $n\leq p+m-3$. Again we divide the rest of the proof of Case 2 into the following two subcases.
\\

\noindent {\bf Subcase 2.1} $p\leq |S\bm\cdot S_N^{[-1]}|\leq (p-1)^2$.

Since $\mathsf d(G/N)=\mathsf d(C_p)=p-1$ and $|S\bm\cdot S_N^{[-1]}|\geq p$, we can find a factorization
$$S\bm\cdot S_N^{[-1]}=W_1\bm\cdot \ldots\bm\cdot W_k \bm\cdot W'$$
with $k\geq 1$, where $\varphi(W_i)$ is a minimal product-one subsequence over $G/N$ for $i\in [1,k]$ and $\varphi(W')$ is product-one free over $G/N$. Since $\mathsf d(G/N)=\mathsf d(C_p)=p-1$, we have $|W_k|\leq p$ and $|W'|\leq p-1$. Let $W_k=w_1\bm\cdot \ldots \bm\cdot w_e$ and $w_e=x^iy^j$, where $e=|W_k|$, $1\leq i\leq p-1$ and $0\leq j\leq m-1$. Let $S_N=u_1\bm\cdot\ldots\bm\cdot u_{\ell}$ such that $u_t\neq 1$ for $t\in[1,\ell-n]$ and $u_t=1$ for $t\in[\ell-n+1, \ell]$, where $\ell=|S_N|$. Then
$$pm+m+p-2-(p-1)^2\leq \ell=|S_N|=|S|-|S\bm\cdot S_N^{[-1]}|\leq pm+m-2.$$

We insert each $u_t$ into the product $w_1\ldots w_e$, either before or after $w_e$, where $t\in[1,\ell-n]$. If we put $u_t$ after $w_e$ then it multiplies the product by $u_t$; putting it before $w_e$ multiplies the product by $u_t^{r^i}$. Therefore, we have
$$\pi(W_k\bm\cdot u_1\bm\cdot\ldots\bm\cdot u_{\ell-n})\supseteq (w_1\ldots w_e)\{u_1, u_1^{r^i}\}\ldots\{u_{\ell-n}, u_{\ell-n}^{r^i}\}.$$
Let $A_0=\{w_1\ldots w_e\}$, $A_t=\{u_t, u_t^{r^i}\}$ for $t\in[1,\ell-n]$ and $A_t=\{1\}$ for $t\in[\ell-n+1,\ell]$. Let $\mathbf{A}=(A_1,\ldots, A_{\ell})$, $s=pm-|W_1\bm\cdot \ldots\bm\cdot W_k|$ and $M=\mbox{stab}(\Pi^{s}(\mathbf{A}))$. Since $pm-(p-1)^2\leq pm-|W_1\bm\cdot \ldots\bm\cdot W_k|= pm-(|S|-\ell-|W'|)\leq \ell-m+1$, we have
$$m+p-3\leq pm-(p-1)^2\leq s\leq \ell-m+1\leq pm-1.$$
By Lemma~\ref{genKneser},
$$|\Pi^{s}(\mathbf{A})|\geq |M|\Big(1-s+\sum_{Q\in N/M}\min\big\{s,|\{j\in[1,\ell]: A_j\cap Q\neq \emptyset\}|\big\}\Big).$$
Recall that $I_M$ is the subset of $[1,\ell]$ such that $j\in I_M$ if and only if $A_j\subseteq M$. As before, if $|I_M|\geq pm+|M|-1$, then there exists a product-one subsequence of $S_M$ with length $pm$ and we are done. We only need to consider the case when $|I_M|\leq \min\{\ell, pm+|M|-2\}$.

We first prove $|\Pi^{s}(\mathbf{A})|\geq m$. As before, we may always assume that $M\lneq N$. Recall that $V_Q=\{j\in[1,\ell]: A_j\cap Q\neq \emptyset\}$ where $Q\in N/M$. Clearly, $V_M\supseteq I_M$, and by Lemma~\ref{basic}~(i) we have $V_M\subseteq I_M$, so $V_M=I_M$. Moreover, if $Q\neq M$, then $V_Q\cap V_M=V_Q\cap I_M=\emptyset$. By Lemma~\ref{basic}~(ii), if $j\in V_Q$ for $Q\neq M$, then the two elements of $A_j$ are in $2$ different cosets of $M$. Let $$\mu=|\{Q\in N/M: |V_Q|\geq s+1\}|.$$ If $\mu=0$, then as in Case 1, we have
\begin{align*}
|\Pi^{s}(\mathbf{A})|&\geq |M|\Big(1-s+\sum_{i\in[1,\ell]\setminus I_M}\sum_{Q\in N/M, A_i\cap Q\neq \emptyset}1+\sum_{i\in I_M}\sum_{Q\in N/M, A_i\cap Q\neq \emptyset}1\Big)\\
&\geq |M|(1-s+2(\ell-|I_M|)+|I_M|)\\
&\geq |M|(1+2(\ell-s)) \ \ \ \ \ (\mbox{as } |I_M|=|V_M|\leq s)\\
&\geq |M|(1+2(m-1))\ \ \ \ (\mbox{as } s\leq \ell-m+1)\\
&\geq m.
\end{align*}

If $\mu\geq 2$, then
\begin{align*}
|\Pi^{s}(\mathbf{A})|\geq |M|(1-s+2s)\geq m \ \ (\mbox{as } s\geq m).
\end{align*}

If $\mu=1$, then let $R\in N/M$ be the unique coset of $M$ such that $|V_R|\geq s+1$. If $R=M$, then $V_R=I_M$. Note that since $|I_{\{1\}}|\leq m+p-3\leq s$, we have $M=R\neq \{1\}$. Since $M\lneq N$, we have $|I_M|\leq \min\{\ell, pm+|M|-2\}=pm+|M|-2$. As in Case 1, we have
\begin{align*}
|\Pi^{s}(\mathbf{A})|&\geq |M|\Big(1+\sum_{j\in[1,\ell]\setminus I_M}\sum_{Q\in N/M, A_j\cap Q\neq \emptyset}1+\sum_{j\in I_M}\sum_{Q\in N/M, A_j\cap Q\neq \emptyset}1-|I_M|\Big)\\
&=|M|(1+2(\ell-|I_M|)) \ \ \ \ \ \ \\
&\geq |M|(1+2((pm+m+p-2-(p-1)^2)-(pm+|M|-2))\\
& \ \ \ (\mbox{as } \ell\geq pm+m+p-2-(p-1)^2  \mbox{ and } |I_M|\leq pm+|M|-2)\\
&= |M|(1+2(m+p-(p-1)^2-|M|))\\
&= (2|M|-1)(m-|M|+p-(p-1)^2)-(p-1)^2+p+m\\
&\geq (2p+1)(p(p+1)+p-(p-1)^2)-(p-1)^2+p+m \\
& \ \ \ \ \ (\mbox{as } |M|\geq p+1 \mbox{ and } m\geq (p+1)|M|)\\
&\geq m.
\end{align*}

If $R\neq M$, then $V_R\cap I_M=\emptyset$, so $|I_M|+|V_R|\leq \ell$. Therefore, by Lemma~\ref{genKneser},
\begin{align*}
|\Pi^{s}(\mathbf{A})|&\geq |M|\Big(1-s+\sum_{Q\in N/M}\min\{s,|\{j\in[1,\ell]: A_j\cap Q\neq \emptyset\}|\}\Big) \\
&= |M|\Big(1-s+\sum_{Q\in N/M, Q\neq R} |\{j\in[1,\ell]: A_j\cap Q\neq \emptyset\}|+s\Big) \\
&= |M|\Big(1+\sum_{Q\in N/M} |\{j\in[1,\ell]: A_j\cap Q\neq \emptyset\}|-|V_R|\Big) \\
&= |M|\Big(1+\sum_{Q\in N/M}\sum_{j\in[1,\ell], A_j\cap Q\neq \emptyset}1-|V_R|\Big)\\
&= |M|\Big(1+\sum_{j\in[1,\ell]}\sum_{Q\in N/M, A_j\cap Q\neq \emptyset}1-|V_R|\Big)\\
&= |M|\Big(1+\sum_{j\in[1,\ell]\setminus I_M}\sum_{Q\in N/M, A_j\cap Q\neq \emptyset}1+\sum_{j\in I_M}\sum_{Q\in N/M, A_j\cap Q\neq \emptyset}1-|V_R|\Big)\\
&= |M|(1+2(\ell-|I_M|)+|I_M|-|V_R|)\\
&\geq |M|(\ell+1) \ \ \ \ (\mbox{as } |I_M|+|V_R|\leq \ell)\\
&\geq m \ \ \ \ (\mbox{as } s\leq \ell-m+1).
\end{align*}

In all the cases, we have shown $|\Pi^{s}(\mathbf{A})|\geq m$, whence $A_0\Pi^{s}(\mathbf{A})=\Pi^{s}(\mathbf{A})=N$. Hence, we can find a subsequence $U\mid u_1\bm\cdot\ldots\bm\cdot u_{\ell}$ with length $|U|=s$ such that $1\in \pi(W_1\bm\cdot \ldots\bm\cdot W_k \bm\cdot U)$. Since $|W_1\bm\cdot \ldots\bm\cdot W_k \bm\cdot U|=pm$, we have $1\in \Pi_{pm}(S)$. This completes the proof of Subcase 2.1.
\\

\noindent {\bf Subcase 2.2} $|S\bm\cdot S_N^{[-1]}|\leq p-1$.

Note that $|S_N|=|S|-|S\bm\cdot S_N^{[-1]}|\geq pm+m+p-2-(p-1)=pm+m-1$. By using Theorem~\ref{EGZ} repeatedly for $p$ times, we can find $p$ disjoint product-one subsequences $T_1, \ldots, T_p$ of $S_N$ with length $|T_j|=m$ for all $j\in [1,p]$. Therefore, $T_1\bm\cdot\ldots\bm\cdot T_p$ is a product-one subsequence of $S$ with length $pm$. This completes the proof of Subcase 2.2.                                                 \qed
\\

\noindent {\bf Acknowledgements}. This work was carried out during a visit of the first author to Brock University
as an international visiting scholar. He would like to sincerely thank the host institution for its hospitality
and for providing an excellent atmosphere for research. This work was supported in part by the National Science
Foundation of China (No. 11701256, 11871258), the Youth Backbone Teacher Foundation of Henan's University (No.
2019GGJS196), the China Scholarship Council (Grant No. 201908410132), and it was also supported in part by a
Discovery Grant from the Natural Sciences and Engineering Research Council of Canada (Grant No. RGPIN 2017-03903).

\end{document}